\newtheorem{theorem}{Theorem}[section]
\newtheorem{prop}[theorem]{Proposition}
\newtheorem{lemma}[theorem]{Lemma}
\newtheorem{corollary}[theorem]{Corollary}
\theoremstyle{remark}
\newtheorem{definition}[theorem]{Definition}
\newtheorem{remark}[theorem]{Remark}
\def\bs{\backslash}
\def\bC{\mathbb{C}}
\def\bQ{\mathbb{Q}}
\def\bR{\mathbb{R}}
\def\bP{\mathbb{P}}
\def\bZ{\mathbb{Z}}
\def\bV{\mathbb{V}}
\def\cM{\mathcal{M}}
\def\cD{\mathcal{D}}
\def\tAut{\mathrm{Aut}}
\def\tdeg{\mathrm{deg}}
\def\tAut{\mathrm{Aut}}
\def\tSpec{\mathrm{Spec}}
\def\olS{\overline S}
\numberwithin{equation}{section}
\numberwithin{equation}{section}
\begin{document}

\title{On the generic degree of two-parameter period mappings}
\author{Chongyao Chen and Haohua Deng}
\address{Department of Mathematics, Duke University, Durham, North Carolina, 27708-0320}
\email{cychen@math.duke.edu}
\email{haohua.deng@duke.edu}

\date{\today}

\maketitle

\small

\noindent\textbf{Abstract. }We present a method for computing the generic degree of a period map defined on a quasi-projective surface. As an application, we explicitly compute the generic degree of three period maps underlying families of Calabi-Yau 3-folds coming from toric hypersurfaces. As a consequence, we show that the generic Torelli theorem holds for these cases.
\normalsize
\section{Introduction}\label{sec01}

The period map is an important tool for studying families of algebraic varieties. Many nice properties associated to smooth projective varieties varying in a family are recorded by the associated polarized variation of Hodge structures (PVHS).

Torelli-type problems characterize the extent to which one could distinguish non-isomorphic projective varieties via their Hodge structures. There are different types of Torelli theorems. The global Torelli theorem is the strongest, stating that any two non-isomorphic projective varieties in a family can be completely distinguished by their polarized Hodge structures. There are classical examples for the global Torelli theorem like the universal family of principle polarized abelian varieties and moduli space of marked polarized K3 surfaces. One of the most recent examples is the family of mirror quintic Calabi-Yau threefolds studied in \cite{Fil23}. The infinitesimal Torelli theorem, on the other hand, states that the first-order deformation of a projective algebraic variety is mapped faithfully to its infinitesimal variation of Hodge structure. There are many more known examples for the infinitesimal Torelli theorem.

In this paper we consider the generic Torelli theorem, or more generally, the generic degree of a period map provided that the degree is well-defined. This is weaker than the global Torelli theorem in the sense that one has the global Torelli theorem only on a Zariski open subset, but it is still very useful. 

We introduce the first main result of this paper. Let $\Phi: S\rightarrow \Gamma\backslash D$ be a period map defined on a quasi-projective surface $S$. We also choose a projective completion $\olS$ of $S$ with $\olS-S$ being a simple normal crossing divisor. We show the following informally stated result:

\begin{theorem}\label{Thm:mainthmongenericdegree}
    The generic degree of $\Phi: S\rightarrow \Gamma\backslash D$ can be computed from the limiting mixed Hodge structure (LMHS) types and monodromy matrices around boundary points $s\in \olS-S$.
\end{theorem}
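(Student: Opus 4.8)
The plan is to turn the generic degree into a count of preimages over a single, well-chosen boundary point, with each preimage weighted by a local multiplicity that is read off from the monodromy cone and the LMHS type. First I would extend the period map to a proper morphism $\overline\Phi: \olS \to \overline{\Gamma\backslash D}$ into a suitable (partial) compactification of the image, using the nilpotent orbit theorem to control the asymptotics near each boundary stratum together with the quasi-projectivity of period images; this realizes $\overline Z:=\overline\Phi(\olS)$ as a projective surface and identifies $\deg\Phi$ with the degree of $\overline\Phi$ onto $\overline Z$. Since the quantity is a generic degree, it equals the number of preimages, counted with local multiplicity, of \emph{any} point of $\overline Z$ whose fibre I can analyze explicitly. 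I would therefore select a boundary point $y_0$ carrying a maximally (or otherwise conveniently) degenerate LMHS, whose fibre over $\olS$ consists only of boundary points $s\in\olS-S$ whose LMHS type matches that of $y_0$ and whose monodromy data are compatible with it.

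The second ingredient is the computation of the local degree of $\overline\Phi$ at each such boundary preimage $s$. Near a codimension-one boundary point the map is modeled, after passing to a finite cover, by a one-variable nilpotent orbit $\exp(zN)F_0$, so the local multiplicity is governed by the nilpotent $N$ and the position of $F_0$ in the flag variety; at a normal-crossing intersection point two commuting nilpotents $N_1,N_2$ interact and the relevant asymptotics are supplied by the several-variable $\mathrm{SL}_2$-orbit theorem. In each case the local topological degree of $\overline\Phi$ is determined purely by the LMHS type and the monodromy cone $\langle N_1,N_2\rangle$, which is exactly the data the statement allows us to use. Summing these local degrees over all boundary preimages of $y_0$ then yields $\deg\Phi$. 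As a consistency check I would also record the intersection-theoretic identity $\deg\Phi\cdot\bigl(c_1(L)^2\cdot\overline Z\bigr)=\bigl(c_1(\overline\Phi^{*}L)^2\cdot\olS\bigr)$ for the Hodge (Griffiths) line bundle $L$, whose right-hand side is computable on $\olS$ via the Cattani--Kaplan--Schmid asymptotics of the Hodge metric and Mumford's good-metric formalism, again in terms of boundary data.

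The main obstacle I expect lies in matching the boundary strata of $\olS$ with those of the image $\overline Z$ and in verifying that the chosen point $y_0$ genuinely computes the generic degree. Concretely, I must ensure that $\overline\Phi$ has no excess or positive-dimensional fibre over $y_0$ (so that Griffiths transversality and properness rule out preimages escaping to, or merging at, deep strata in an uncontrolled way), and that the local degrees assembled from the one- and two-variable degenerations add up without overcounting when several boundary divisors of $\olS$ collapse onto the same stratum of $\overline Z$. The codimension-two intersection points, where the interaction of $N_1$ and $N_2$ dictates both the relative-weight filtration and the local branching, are where the bookkeeping is most delicate; controlling them via the several-variable $\mathrm{SL}_2$-orbit theorem, and reconciling the resulting count with the intersection-number identity above, is the crux of the argument.
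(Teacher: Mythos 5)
Your overall strategy---compactify the period map, pick a distinguished two\hyphen dimensional boundary cone whose LMHS type is unique among all boundary strata, and identify the generic degree with a weighted fibre count over that point---is exactly the strategy of the paper (Propositions \ref{Prop:DegComp} and \ref{prop:localtoglobaldegree}). However, there is a genuine gap at the very first step. You assert that the nilpotent orbit theorem plus quasi-projectivity of the image produces a proper extension $\overline\Phi\colon\overline S\to\overline{\Gamma\backslash D}$. Neither ingredient supplies the target space: Schmid's theorem is a local asymptotic statement, and the quasi-projectivity of $\mathrm{Img}(\Phi)$ says nothing about extending $\Phi$ across $\overline S - S$. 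What is actually needed is a $\Gamma$-compatible (weak) fan $\Sigma$ containing every local monodromy cone, so that the Kato--Nakayama--Usui space $\Gamma\backslash D_\Sigma$ exists and $\Phi$ extends to a morphism of logarithmic manifolds with compact algebraic image. For one-parameter period maps this is automatic; for two parameters it is the main theorem of \cite{DR23} (Theorem \ref{Thm:DR23main} here), and it is the essential input that makes the fibre-counting argument possible at all. Your proposal treats this as routine when it is the crux.

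A second, smaller issue: you claim the local degree of $\overline\Phi$ at a boundary preimage is ``determined purely by the LMHS type and the monodromy cone.'' In fact it factors as $\deg(\overline{\Phi_s})\cdot\deg(\overline{\varphi_s}|_{\overline{\wp_s}})$, where the second factor measures whether the \emph{global} monodromy group $\Gamma$ contains an element whose adjoint action preserves the nilpotent orbit $(\sigma_s,F_s)$ but permutes the two boundary rays of the cone. This is an arithmetic condition on $\Gamma$, not local data; it is controlled by the injectivity statements of \cite{KU08} (Prop.~7.4.3 and Thm.~A(iv)), which show the factor is $1$ or $2$ and is forced to be $1$ when the type $\langle A|B|C\rangle$ has $A\neq C$. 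In the $V_{(2,29)}$ example this factor is genuinely $2$ and accounts for the whole answer, so it cannot be absorbed into a ``local multiplicity.'' Your intersection-theoretic identity with the Hodge bundle is a reasonable consistency check but is not used, and is not needed, in the paper's argument.
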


The main tool for our proof is Kato-Nakayama-Usui's theory on the space of nilpotent orbits \cite{KU08}, \cite{KNU10} which is based on the classical nilpotent and $\mathrm{SL}_2$-orbit theorems \cite{Sch73}, \cite{CKS86}. Since KNU's construction always exists for one-parameter period maps, several authors have successfully computed the generic degree of some one-parameter period maps as applications of the theory, see for example \cite{Usu08}, \cite{Shi09}, \cite{HK21}\footnote{Though the result in \cite{Usu08} is correct, it has one flawed argument which may cause problems in higher-dimensional cases. The arguments in this paper successfully fix it.}. 

Recently, C. Robles and the second author showed that KNU's construction exists for any two-parameter period maps \cite{DR23}. This allows us to prove Theorem \ref{Thm:mainthmongenericdegree} by only looking at some distinguished boundary points (and their image in the Kato--Nakayama--Usui space) like the one-parameter case, though the details are more complicated.

The second part of the paper is devoted to the study of specific examples. We study three different families of smooth Calabi-Yau threefolds with Hodge numbers $(1,2,2,1)$ that are hypersurfaces of complete toric varieties and compute the generic degree of each of these families using Theorem \ref{Thm:mainthmongenericdegree}. These families are the primary examples of study on the subject of mirror symmetry \cite{AGM94,HKTY95,CK99}. We denote them as the families $V_{(2,29)}$, $V_{(2,38)}$ and $V_{(2,86)}$ based on their Hodge diamonds. In particular, the family $V_{(2,86)}$ is also known as the family of mirror octics. 

For each of these Calabi-Yau hypersurfaces, we study the tautological family over the simplified moduli space. This family is a base change of the tautological family over complex moduli $\cM$ via a generically étale morphism $\phi:\cM_{\mathrm{simp}}\rightarrow \cM$. One advantage of this family is that $\cM_{\mathrm{simp}}$ admits a natural projective completion given by a complete toric surface. 
Now, we study the VHS associated to these families and calculate boundary LMHS types and monodromy matrices required in Theorem \ref{Thm:mainthmongenericdegree}.

First, the Picard-Fuchs system can be obtained from the GKZ system via Lemma \ref{lemquotmum}, which justifies the factorization argument first made in \cite{HKTY95} and rephrases it in the language of $\cD$-modules. Then one can calculate the discriminant locus from the Picard-Fuchs $\cD$-module. By blowing up the non-complete intersection points on the discriminant locus, we then obtain a smooth projective completion $\overline{\cM}_{\mathrm{simp}}$ of $\cM_{\mathrm{simp}}$, such that $\overline{\cM}_{\mathrm{simp}}- \cM_{\mathrm{simp}}$ is a simple normal crossing divisor as desired. Next, one can obtain the Gauss-Manin connection represented in a chosen local basis from the Picard-Fuchs $
\cD$-module. Then the log-monodromies represented in this basis are just the (Tate twisted) residues of the Gauss-Manin connection in suitable local coordinates. Meanwhile, the symplectic form can be obtained by calculating the intersection matrix of the local basis, which can be written in terms of the Yukawa couplings and $n$-point functions.
Finally, the types of the LMHS can be obtained via the Jordan normal form of the nilpotent operators and the polarized period relations \cite{KPR19}, as the first author did in \cite{Chen23}.

As a consequence of computations and application of Theorem \ref{Thm:mainthmongenericdegree}, we arrive at the second main theorem of this paper:
\begin{theorem}\label{Thm:mainthmcomputedegree}
    The period maps associated to the families $V_{(2,29)}$, $V_{(2,38)}$, and $V_{(2,86)}$ (mirror octic) over simplified moduli have generic degree $2$, $1$, and $1$ respectively.
\end{theorem}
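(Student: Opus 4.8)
The plan is to apply Theorem~\ref{Thm:mainthmongenericdegree} to each of the three families separately, since the theorem reduces the computation of the generic degree to data extracted from the boundary of the projective completion $\overline{\cM}_{\mathrm{simp}}$: namely the LMHS types and the monodromy (log-monodromy) matrices at each boundary point. So the proof is really a sequence of three explicit computations, organized by the preparatory steps sketched in the introduction. First I would, for each family, obtain the Picard--Fuchs $\cD$-module by applying Lemma~\ref{lemquotmum} to extract the relevant factor of the GKZ system; this gives the differential operators annihilating the periods and, from them, the discriminant locus inside $\cM_{\mathrm{simp}}$. Next I would construct the smooth SNC completion $\overline{\cM}_{\mathrm{simp}}$ by taking the natural toric completion and blowing up the non-complete-intersection points where branches of the discriminant meet the toric boundary or each other, so that $\overline{\cM}_{\mathrm{simp}}-\cM_{\mathrm{simp}}$ is simple normal crossing.

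With the completion fixed, the next step is local: at each boundary point $s$ I would read off the Gauss--Manin connection from the Picard--Fuchs $\cD$-module in a chosen frame, and compute the log-monodromies $N_1,N_2$ as the (Tate-twisted) residues of the connection along the two local coordinate divisors. From the Jordan normal forms of $N_1$, $N_2$, and $N_1+N_2$, together with the polarized period relations of \cite{KPR19} and the symplectic form recovered from the intersection matrix (expressed via Yukawa couplings and $n$-point functions), I would determine the LMHS type at $s$, following the method of \cite{Chen23}. The symplectic/polarization data is essential here because the Jordan type of the nilpotents alone does not pin down the Hodge-theoretic type.

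Finally, I would feed this boundary data into Theorem~\ref{Thm:mainthmongenericdegree}. Concretely, the generic degree is computed by analyzing how the fibers of the period map over a generic point are identified under the monodromy action, which the theorem encodes through the images of distinguished boundary points in the Kato--Nakayama--Usui space. For a point of degree $d$, one expects to see $d$ distinct preimages that become identified precisely when the LMHS data and monodromy matrices at a controlling boundary point force a nontrivial deck-transformation-type symmetry; degree $1$ corresponds to the absence of such identifications. I anticipate that the degree-$2$ outcome for $V_{(2,29)}$ arises from an extra symmetry of the boundary configuration (for instance a coordinate involution compatible with the monodromy cone), whereas $V_{(2,38)}$ and $V_{(2,86)}$ exhibit no such symmetry and hence degree $1$.

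The main obstacle I expect is the LMHS-type determination at the boundary points, particularly at the most degenerate (large-complex-structure / maximal-unipotent) points and at the blown-up non-complete-intersection points. At such points the two log-monodromies interact, the relative monodromy weight filtration $W(N_1+N_2)$ must be shown to be compatible with the individual $W(N_i)$ (the $\mathrm{SL}_2$-orbit/several-variable nilpotent-orbit compatibility of \cite{CKS86}), and extracting the precise type from the period relations can require genuinely delicate rank and signature computations with the Yukawa couplings. The second, more conceptual, difficulty is verifying that only finitely many distinguished boundary points actually contribute to the degree computation and correctly accounting for their contributions via the two-parameter KNU construction of \cite{DR23}, so that Theorem~\ref{Thm:mainthmongenericdegree} applies cleanly despite the added complexity of the two-variable setting.
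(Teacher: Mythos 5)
Your computational pipeline (GKZ $\to$ Picard--Fuchs via Lemma~\ref{lemquotmum}, discriminant locus, SNC completion by blow-ups, residues of the Gauss--Manin connection, LMHS types via Jordan forms and the polarization recovered from Yukawa couplings) matches the paper's Section~\ref{sec04} exactly, and your intuition that the degree $2$ for $V_{(2,29)}$ comes from a coordinate involution compatible with the monodromy cone (here $z_1\leftrightarrow z_2$) while the other two families lack such a symmetry is also the right mechanism. However, your final step --- ``analyzing how the fibers over a generic point are identified under the monodromy action'' --- is too vague to constitute a proof, and it omits the two ingredients the paper actually uses to extract a number. First, the degree is computed via the factorization $\mathrm{deg}(\Phi)=\mathrm{deg}(\overline{\Phi_s})\cdot\mathrm{deg}(\overline{\varphi_s}|_{\overline{\wp_s}})$ of Proposition~\ref{prop:localtoglobaldegree} at a \emph{single} well-chosen codimension-two boundary point $s$, and this requires verifying that the nilpotent orbit (equivalently, the LMHS type) at $s$ occurs at no other boundary point --- this is precisely why the paper tabulates the types of \emph{all} two-dimensional boundary cones in \eqref{V229:cone1}--\eqref{V229:cone5}, \eqref{cone21}--\eqref{cone211}, and the list for $V_{(2,86)}$. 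Without this uniqueness, preimages of a generic point could accumulate near other boundary strata and your count would be incomplete; your closing remark flags this as a ``difficulty'' but does not resolve it.

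Second, the absence (or presence) of a cone-flipping symmetry only controls the factor $\mathrm{deg}(\overline{\varphi_s}|_{\overline{\wp_s}})\in\{1,2\}$ via Proposition~\ref{prop:localtoglobalKNUmapdegree}; it says nothing about the local factor $\mathrm{deg}(\overline{\Phi_s})$, which a priori could exceed $1$ and would multiply into the answer. The paper pins this down by an explicit local computation: writing the lifted period map as $\exp(l(z_1)N_1+l(z_2)N_2)\psi_U$, reading off two entries of $F^3\tilde{\Phi}_U$ of the form $\zeta_i(z_1,z_2)+l(z_{j})$, exponentiating to get the map $(z_1,z_2)\mapsto(z_{j}\exp(2\pi i\zeta_1),z_{j'}\exp(2\pi i\zeta_2))$, and checking its Jacobian at the origin is invertible, so that $\overline{\Phi_s}$ is a local isomorphism and $\mathrm{deg}(\overline{\Phi_s})=1$. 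This step depends on the explicit form of the monodromy matrices \eqref{eqn:monomatricesv229}, \eqref{eqn:monomatricesv286} and on the choice of frame, and it is not implied by anything in your outline. As written, your argument would at best bound the degree by the order of the symmetry group you identify; to get equality you need both the uniqueness of the chosen LMHS type and the local injectivity of $\overline{\Phi_s}$.
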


Recall that a smooth variety is said to satisfy the generic Torelli theorem if the period map associated to the tautological family over complex moduli has generic degree $1$. Theorem \ref{Thm:mainthmcomputedegree} and the computation of the generic degrees of $\phi:\cM_{\mathrm{simp}}\rightarrow \cM$ in Section \ref{sec04} also imply:
\begin{corollary}
    The Calabi-Yau threefolds $V_{(2,29)}$,  $V_{(2,38)}$, and $V_{(2,86)}$ satisfy the generic Torelli theorem.
\end{corollary}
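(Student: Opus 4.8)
The plan is to deduce the corollary from Theorem~\ref{Thm:mainthmcomputedegree} together with the elementary multiplicativity of generic degrees under composition. Write $\Phi_{\cM}$ for the period map attached to the tautological family over complex moduli $\cM$, and $\Phi_{\mathrm{simp}}$ for the period map over simplified moduli. Since the family over $\cM_{\mathrm{simp}}$ is by construction the pullback of the family over $\cM$ along the generically étale morphism $\phi\colon\cM_{\mathrm{simp}}\to\cM$, the associated PVHS is pulled back as well, and hence the period maps factor as
$$\Phi_{\mathrm{simp}} \;=\; \Phi_{\cM}\circ\phi.$$
All three maps are dominant onto their common image, which has the same dimension as the sources (étale morphisms preserve dimension), so each carries a well-defined generic degree. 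First I would record the multiplicativity identity
$$\tdeg(\Phi_{\mathrm{simp}}) \;=\; \tdeg(\Phi_{\cM})\cdot\tdeg(\phi),$$
which follows by counting a generic fiber of the composite: over a generic point of the common image, $\Phi_{\cM}^{-1}$ contributes $\tdeg(\Phi_{\cM})$ points of $\cM$, and over each of these the generically étale $\phi$ contributes $\tdeg(\phi)$ points of $\cM_{\mathrm{simp}}$.

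With this identity in hand, the proof splits according to the value of $\tdeg(\Phi_{\mathrm{simp}})$ supplied by Theorem~\ref{Thm:mainthmcomputedegree}. For $V_{(2,38)}$ and $V_{(2,86)}$ we have $\tdeg(\Phi_{\mathrm{simp}})=1$; since both factors on the right-hand side are positive integers, this forces $\tdeg(\Phi_{\cM})=1$, and the generic Torelli theorem holds at once. For $V_{(2,29)}$ we instead have $\tdeg(\Phi_{\mathrm{simp}})=2$, so the identity reads $\tdeg(\Phi_{\cM})\cdot\tdeg(\phi)=2$; to conclude $\tdeg(\Phi_{\cM})=1$ it then suffices to establish $\tdeg(\phi)=2$.

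The main obstacle is therefore the independent computation of $\tdeg(\phi)$ for $V_{(2,29)}$, carried out in Section~\ref{sec04}. This is a question purely about the base change and is disentangled from the Hodge-theoretic machinery of Theorem~\ref{Thm:mainthmongenericdegree}: I would determine $\tdeg(\phi)$ by describing $\phi$ explicitly in the toric coordinates on $\cM_{\mathrm{simp}}$ coming from the GKZ/Cox data, namely by identifying the finite group of residual symmetries that $\phi$ quotients by (equivalently, by reading off the generic fiber cardinality of the monomial-divisor correspondence). Once $\tdeg(\phi)=2$ is verified for $V_{(2,29)}$, the multiplicativity identity yields $\tdeg(\Phi_{\cM})=1$ in all three cases, which is precisely the assertion of the generic Torelli theorem.
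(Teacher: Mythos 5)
Your proposal is correct and follows essentially the same route as the paper: the corollary is deduced from Theorem \ref{Thm:mainthmcomputedegree} via the factorization $\Phi_{\mathrm{simp}}=\Phi_{\cM}\circ\phi$ and multiplicativity of generic degree, with $\deg(\phi)=2$ for $V_{(2,29)}$ (computed in Section \ref{sec04} as $|\tAut(\Sigma)/\tAut^{t}(\Sigma)|$) and $\deg(\phi)=1$ for the other two families since there $\cM_{\mathrm{simp}}=\cM$. The paper states exactly this criterion, $\deg(\Phi_{\mathrm{simp}})=\deg(\phi)$, at the start of Section \ref{sec04}.
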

To the best of our knowledge, these are the first known examples of Calabi-Yau threefolds that satisfy the generic Torelli theorem with complex moduli larger than one dimension.

\medskip

The article is organized as follows. In Section \ref{sec02} we summarize all necessary materials in toric geometry and Hodge theory. In Section \ref{sec03} we present and prove the main formula for computing generic degree. Section \ref{sec04} is devoted to the study of the three families $V_{(2,29)}, V_{(2,38)}$ and $V_{(2,86)}$, including explicitly their boundary LMHS types and monodromy matrices. In Section \ref{sec05} we will apply the main theorem proved in Section \ref{sec03} and computational results from Section \ref{sec04} to show Theorem \ref{Thm:mainthmcomputedegree}.

\medskip
\noindent\textbf{Acknowledgement:} The authors thank Colleen Robles for exchanges of many insightful ideas. The second author also thanks Matt Kerr for related discussions.
\section{Background materials}\label{sec02}

\subsection{Toric geometry}
In this section, we fix the notations that will be used in the next section. These are standard and follow closely those in \cite{CK99,CLS11}.

\subsubsection{Toric varieties}
Denote the standard full-rank integral lattice group in $\bQ^n$ as $N$ and its dual lattice as $M$. 
We will use the $k\times k$ matrices $\frac{A_{n\times k}}{B_{(k-n)\times k}}$ to record the toric data. The column vectors of $A$ form a finite set 
\[
\{v_1,v_2,\dots,v_k\}=:\Xi\subset N,
\]
and the set of row vectors $\{w_1,w_2,\dots,w_{r}\}$ ($r := k-n$) of $B$ generates the lattice of relations among $\Xi$
\[
\Lambda:=\left\{l=(l_i)\in \bZ^k: \sum_{i=1}^k l_iv_i=0\right\}
\]
over $\bZ$. 
In particular, we have $A\cdot B^t=0$. These data can be viewed as recording either the information about a polyhedral fan $\Sigma$ or the information of an integral polytope $\Delta^\circ$. Each of these points of view leads to one way to construct a toric variety, which we now briefly review.

For a strongly convex rational polyhedral fan $\Sigma\subset N_\bR:=N\otimes_\bZ\bR$, denote $\Sigma(m)$ as the set of its $m$-dimension cones. In this case, we use the columns of $A$ to record the primitive generators of elements in $\Sigma(1)$. Denote $\sigma_{i_1,\dots,i_m}\in \Sigma(m)$ as the $m$-dimensional cone generated by $\{v_{i_1},v_{i_2},\dots,v_{i_m}\}$. 
The toric variety constructed from $\Sigma$ is denoted as $X_\Sigma$ and is called the \textit{GIT quotient construction}. More precisely, let $S = \bC[x_1,\dots,x_k]$.
\begin{definition}The \textit{Cox ideal} $I_{\Sigma}$ is the ideal of $S$ generated by the monomials corresponding to each $\sigma\in\Sigma(n)$, defined by
\[
x_{i_1}x_{i_2}\dots x_{i_\alpha},\quad  \alpha = k-\sigma(1),\,\bR_+\cdot v_{i_s}\notin\sigma(1).
\]
\end{definition}
\noindent In particular, we have 
\begin{align*}
X_\Sigma  & = \mathrm{Proj}_\Sigma(S)\\
& = [\mathrm{Spec(S)- \bV(I_\Sigma)}]\sslash G,
\end{align*}
where the $G:=(\bC^*)^r$-action on $\bC^k$ is determined by the matrix $B$ as
\begin{align*}
G\times \bC^k & \rightarrow  \bC^k\\
(s_1,\dots,s_r)\times(x_1,\dots,x_k) & \rightarrow \left(\prod_{i=1}^rs_i^{w_{i,1}}\cdot x_1,\cdots,\prod_{i=1}^rs_i^{w_{i,k}}\cdot x_k\right)
\end{align*}
This construction suggests that one could think about the toric variety as a generalization of weighted projected space. More precisely, the degree of a variable is replaced by a multi-degree, and the origin that got deleted before the quotient is replaced by $\bV(I_\Sigma)$. For the variable $x_i$, the multi-degree is $\tdeg(x_i) = (w_{1,i},\dots,w_{r,i})$. We note this multi-degree can be viewed as taking value in $A_{n-1}(X_\Sigma)\otimes \bZ$.

For an integral polytope $\Delta^\circ$ i.e., a polytope in $\bR^k$ whose vertices lie in $N$. The toric data records all the integral points inside the polytope $\Delta^\circ$, i.e., $\Xi=\Delta^\circ \cap N$. We will use $\Delta^\circ_{i_1,\dots,i_m}$ to denote the $m-1$-dimensional face of $\Delta^\circ$ that has $v_{i_1},\dots,v_{i_m}$ as its vertices. Denote the toric variety constructed from it as $\bP_{\Delta^\circ}$, which is the Zariski closure of the open immersion
\begin{equation}\label{Taction}
    \begin{split}
    (\bC^*)^n &\hookrightarrow \bP^k\\
    (t_1,\dots,t_n)& \rightarrow \left[\prod_{i=1}^nt_i^{r_{1,i}}:\prod_{i=1}^nt_i^{r_{2,i}}:\cdots:\prod_{i=1}^nt_i^{r_{k,i}}\right].    
    \end{split}
\end{equation}
This also induces a $(\bC^*)^n$-automorphism on $\bP_{\Delta^\circ}$. We will call this construction the $\textit{projective embedding construction}$.

Each of the two constructions contains a top dimensional Zariski dense tori $T:=(\bC^*)^n$ inside the toric variety. For $\bP_{\Delta^\circ}$ it is clear from the construction, whereas for $X_\Sigma$, we have $T = (\bC^*)^k/G\hookrightarrow X_{\Sigma}$.

\subsubsection{Reflexive polytope and Baytrev mirror}
The starting point of a Baytrev's mirror construction is a reflexive polytope and its mirror, we first recall some definitions.
\begin{definition}\label{defnf}
For an arbitrary full-dimensional integral polytope $\Delta\subset M_\bR$. For each codimension $s$ face $F$ of $\Delta$, let $\sigma_F$  be the $s$-dimensional strongly convex rational polyhedral cone in $N_\bR$ defined as
\[
\sigma_F:=\{v\in N_\bR|\braket{m,v}\geq\braket{m',v} ,\forall m\in F,m'\in \Delta\}.
\]
Then the \textit{normal fan} $\Sigma_\Delta$ of $\Delta$ is the union of all $\sigma_F$.      
\end{definition}

\begin{definition}\label{defref}
Let $\Delta$ be the same in Definition \ref{defnf}, then $\Delta\subset M_\bR$ is called \textit{reflexive} if for any facet (codimension-1 face) $\Gamma$, there is a unique normal vector $v_\Gamma\in N$, such that
$$
\Delta = \{m\in M_\bR|\braket{m,v_\Gamma}\geq-1,\forall \ \Gamma\leq \Delta\}.
$$
and $\mathrm{Int}(\Delta) \cap N = \{0\}$.
The \textit{polar dual} $\Delta^\circ\subset N_\bR$ of $\Delta$ is defined as
$$
\Delta^\circ:=\{n\in N_\bR|\braket{m,n}\geq -1, \forall m\in \Delta\}.
$$
\end{definition}
If $\Delta$ is reflexive, $\Delta^\circ$ is also reflexive and we have $(\Delta^\circ)^\circ = \Delta$. Furthermore, $\Delta^\circ$ is the convex hull of $v_\Gamma$, and $\{v_\Gamma\}$ spans $\Sigma_\Delta(1)$. For each face $F$ of $\Delta$, one can define its \textit{dual face} to be
\[
F^\circ:=\bigcap_{F\subset\Gamma} \Gamma \subset \Delta^\circ,
\]
where $\Gamma$ runs through all the facets of $\Delta$ that contains $F$. One can easily show that $(F^\circ)^\circ = F$ and $\dim(F)+\dim(F^\circ) = \dim \Delta-1$.

These constructions also make the comparison of the two different constructions of the toric variety possible. In fact, we have $\bP_{\Delta^\circ}\cong X_{\Sigma_{\Delta}}$, and $\bP_{\Delta}\cong X_{\Sigma^\circ}$, where $\Sigma^\circ: = \Sigma_{\Delta^\circ}$.

By Baytrev mirror construction, the mirror of $X_\Sigma$, denote as $\check X_{\Sigma}$ is $\check X_{\Sigma}\cong X_{\Sigma^\circ}\cong \bP_{\Delta^\circ}$. On the level of the Calabi-Yau hypersurface, we have $\check X$ as the anti-canonical hypersurface of $\check X_{\Sigma}$ that is mirror to $X$.

\subsubsection{Automorphisms of $X_\Sigma$}
The automorphism group $\tAut(X_\Sigma)$ of $X_\Sigma$ is generated by three types of automorphisms that come from the $T$-action induced from \eqref{Taction}, the root symmetry, and the fan symmetry. The first two types of automorphism are continuous, together they generate the connected component $\tAut_0(X_\Sigma)$ of $\tAut(X_\Sigma)$.

The root symmetry is defined as follows, for each of the variables $x_i,i=1,\dots,k$ in the GIT quotient construction, if there exists a monomial $x^{\alpha}$ in $S$, such that $x_i\nmid x^\alpha$, and $\tdeg(x_i) = \tdeg(x^\alpha)$. Then the \textit{root pair} $(x_i,x^{\alpha})$ defines an $\bC$-automorphism on $X_\Sigma$
\[
\begin{split}
    \bC\times X_\Sigma & \rightarrow X_\Sigma \\
    (\lambda,[x_1:\dots:x_k]) & \rightarrow [x_1:\dots:x_{i-1}:x_i+\lambda x^\alpha:x_{i+1}:\dots:x_k].
\end{split}
\]
Therefore, the group of root symmetry $\tAut_r(X_\Sigma)$ is the group generated by all the possible root pairs.

The last type of automorphism comes from the symmetry of the $\Sigma$. More precisely, these automorphisms form a finite group $\tAut(\Sigma)$, the subgroup of $\tAut(N)$ that preserves $\Sigma$.

\subsubsection{Moduli spaces}
There are three moduli spaces that are naturally associated to $V$, i.e., the polynomial moduli, the simplified moduli, and the complex moduli. The polynomial moduli is
\[
\cM_{\mathrm{poly}} := \bP(L(\Delta^\circ\cap N))^{s}/\mathrm{Aut}(X_{\Sigma^\circ}),
\]
where $L(\Delta^\circ\cap N)$ is the vector space of Laurent polynomials associated to the finite set $\Delta^\circ\cap N$. Meanwhile, the superscript $s$ refers to the restriction to the smooth locus, which is quasi-projective.
This space parameterized the complex structure of $V$ that can be realized as hypersurfaces of $X_\Sigma$. The simplified is
\[
\cM_{\mathrm{simp}}:= \bP(L((\Delta^\circ\cap N))_0)^s/T,
\]
where $(\Delta^\circ\cap N))_0$ is the subset of $\Delta^\circ\cap N$ that excludes the points that lies in the interior of any facet of $\Delta^\circ$.

The \textit{dominance theorem} conjectured in \cite{AGM93} and proved in \cite{CK99} states that 
\begin{theorem}
    The quotient map $\phi:\cM_{\mathrm{simp}}\rightarrow \cM_{\mathrm{poly}}$ is generically {\'e}tale.
\end{theorem}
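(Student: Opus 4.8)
The plan is to establish the stronger infinitesimal statement that the differential $d\phi$ is an isomorphism at a generic point; since everything is over $\bC$, this is equivalent to $\phi$ being étale there, as separability is automatic in characteristic zero and a generically surjective differential forces dominance. The morphism $\phi$ comes from the inclusion of coefficient spaces $L\big((\Delta^\circ\cap N)_0\big)\hookrightarrow L(\Delta^\circ\cap N)$, which descends to the quotients precisely because the torus $T$ defining $\cM_{\mathrm{simp}}$ is the maximal torus of the connected automorphism group $\tAut_0(X_{\Sigma^\circ})$ defining $\cM_{\mathrm{poly}}$. I would first check that a generic stable Laurent polynomial has finite stabilizer under $\tAut(X_{\Sigma^\circ})$, so that at a generic $f$ supported on $(\Delta^\circ\cap N)_0$ the tangent spaces are
\[
T_{[f]}\cM_{\mathrm{simp}}=L\big((\Delta^\circ\cap N)_0\big)/(\bC f+\mathfrak t\cdot f),\qquad T_{[f]}\cM_{\mathrm{poly}}=L(\Delta^\circ\cap N)/(\bC f+\mathfrak a\cdot f),
\]
where $\mathfrak t=\mathrm{Lie}(T)$ and $\mathfrak a=\mathrm{Lie}(\tAut_0(X_{\Sigma^\circ}))=\mathfrak t\oplus\mathfrak r$, with $\mathfrak r$ spanned by the infinitesimal root symmetries. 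The finite fan-symmetry factor does not contribute to the tangent space, and since $T$ only rescales monomials we have $\mathfrak t\cdot f\subset L\big((\Delta^\circ\cap N)_0\big)$. Under these identifications $d\phi$ is induced by the inclusion, so it is enough to prove that for generic $f$ the image of $\mathfrak r\cdot f$ maps isomorphically onto the complement $L(\Delta^\circ\cap N)/L\big((\Delta^\circ\cap N)_0\big)$.

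The equality of dimensions needed for this is a lattice-point count. Using the description of $\tAut(X_{\Sigma^\circ})$ recalled above, $\tdim\tAut_0(X_{\Sigma^\circ})=n+\#\{\text{Demazure roots}\}$, and these roots are in canonical bijection with the lattice points lying in the relative interior of the facets of $\Delta^\circ$ — which are exactly the monomials removed in passing from $L(\Delta^\circ\cap N)$ to $L\big((\Delta^\circ\cap N)_0\big)$. Hence $\#\{\text{roots}\}=\#(\Delta^\circ\cap N)-\#(\Delta^\circ\cap N)_0=\tdim\mathfrak r$, and subtracting the group dimensions gives $\tdim\cM_{\mathrm{poly}}=\#(\Delta^\circ\cap N)_0-1-n=\tdim\cM_{\mathrm{simp}}$. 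In particular $\mathfrak r\cdot f$ has exactly the dimension of the complement, so the infinitesimal claim reduces to a nondegeneracy (full-rank) statement.

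The hard part will be this nondegeneracy. I would compute the action of the root symmetry attached to a root $m$ on the Cox coordinates, $x_i\mapsto x_i+\lambda x^\alpha$ with $\tdeg(x_i)=\tdeg(x^\alpha)$, and track its effect on $f$: differentiating at $\lambda=0$ shows that each such one-parameter subgroup moves $f$ by a vector whose component in the complement is a nonzero multiple of the single interior-facet monomial indexed by $m$, plus corrections lying in $L\big((\Delta^\circ\cap N)_0\big)$. The root $\leftrightarrow$ interior-point dictionary then presents the linear map $\mathfrak r\to L(\Delta^\circ\cap N)/L\big((\Delta^\circ\cap N)_0\big)$ by a matrix that is triangular in a suitable ordering of the facets, with diagonal entries given by coefficients of $f$; hence it is invertible whenever these coefficients are nonzero. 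Since full rank is an open condition cut out by the nonvanishing of a polynomial in the coefficients of $f$, it suffices to exhibit one $f$ for which it holds. Granting this, $d\phi$ is an isomorphism on a dense open locus and $\phi$ is generically étale. The two combinatorial inputs requiring the most care are the finiteness of the generic stabilizer and the precise root/interior-point correspondence, both of which rely on the reflexivity of $\Delta^\circ$.
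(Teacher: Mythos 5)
First, a point of comparison: the paper does not prove this statement at all. It is the dominance theorem, conjectured in \cite{AGM93}, and the paper simply imports it from \cite{CK99}; so the only ``proof'' in the paper is a citation. Your proposal is, in outline, the standard argument underlying that reference: identify $\tAut_0(X_{\Sigma^\circ})$ as generated by the torus and the root symmetries, use the bijection between roots and lattice points in the relative interiors of facets to get $\tdim\cM_{\mathrm{poly}}=\tdim\cM_{\mathrm{simp}}$, and reduce generic \'etaleness to the claim that for generic $f$ the root directions $\mathfrak r\cdot f$ span $L(\Delta^\circ\cap N)/L\big((\Delta^\circ\cap N)_0\big)$. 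That reduction and the dimension count are correct, modulo two small caveats: you need generically \emph{trivial} (not merely finite) stabilizers for the orbit tangent-space formulas to hold verbatim on the varieties (otherwise work orbifold-locally, which costs nothing for a generic statement), and one should remember that $\tAut_0$ is not reductive, so the existence and separatedness of the quotient $\cM_{\mathrm{poly}}$ is itself part of what \cite{CK99} addresses; for the generic-degree purposes of this paper the infinitesimal statement at a generic point is what matters.

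The genuine gap is exactly where you flag ``the hard part.'' Your justification of full rank --- that the matrix of $\mathfrak r\to L(\Delta^\circ\cap N)/L\big((\Delta^\circ\cap N)_0\big)$ is triangular in a suitable ordering of the facets with diagonal entries given by coefficients of $f$ --- is asserted, not proved, and as stated it is dubious. Writing the root attached to an interior facet point $m$ (with distinguished ray $\rho_0$) as the derivation $x^\alpha\partial_{x_{\rho_0}}$, one computes that it sends the monomial $\chi^{m'}$ to $\bigl(\langle m',v_{\rho_0}\rangle+1\bigr)\chi^{m+m'}$; hence the $(m,m'')$ entry of your matrix is $a_{m''-m}\bigl(\langle m''-m,v_{\rho_0}\rangle+1\bigr)$ whenever $m''-m$ is a lattice point of the polytope, and in particular every diagonal entry equals the coefficient $a_0$ of the origin monomial --- the matrix is a perturbation of $a_0\cdot\mathrm{Id}$, not triangular with facet-dependent diagonal. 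The good news is that your fallback (full rank is an open condition, so one witness suffices) does close the gap once you actually produce the witness: evaluating at $f=\chi^0$, which is legitimate because the determinant is a polynomial on all of $L(\Delta^\circ\cap N)$ and the stable locus is dense, the matrix becomes the identity. Without that final computation, however, the decisive nondegeneracy step of your argument is missing, and it is precisely the step carrying the content of the theorem.
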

So if $\tAut_r(X_\Sigma)=0$, then the generic degree of $\phi$ is the order of the subgroup of $\tAut(\Sigma)$ that acts on $\cM_{\mathrm{simp}}$ non-trivially. Finally, the complex moduli $\cM$ is the moduli space of the complex structure. $\cM$ and $\cM_{\mathrm{poly}}$ are related by the following proposition
\begin{prop}
    The following are equivalent:
    \begin{enumerate}
        \item $\cM = \cM_{poly}$.
        \item $\dim \cM = \dim \cM_{poly}$.
        \item For any two-dimensional face of $\Delta$, either it has no interior point, or its dual face in $\Delta^\circ$ has no interior point.
    \end{enumerate}
\end{prop}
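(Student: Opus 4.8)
The plan is to view both spaces inside a common deformation space and then reduce the equivalences to Batyrev's combinatorial Hodge-number formulas. Varying the coefficients of the defining Laurent polynomial of $V$ produces genuine deformations of complex structure, so there is a natural morphism $\iota\colon\cM_{\mathrm{poly}}\to\cM$ onto the locus of complex structures realized by hypersurfaces in the fixed toric variety. Generically, two such hypersurfaces are isomorphic only through an automorphism of the ambient toric variety, and these have already been divided out in forming $\cM_{\mathrm{poly}}$; hence $\iota$ is generically injective and $\dim\cM_{\mathrm{poly}}=\dim\overline{\iota(\cM_{\mathrm{poly}})}\le\dim\cM$. As $\cM$ is irreducible, $\iota$ is dominant---that is, $\cM=\cM_{\mathrm{poly}}$---precisely when these dimensions agree. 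This gives $(1)\Leftrightarrow(2)$: the implication $(1)\Rightarrow(2)$ is immediate, and $(2)\Rightarrow(1)$ follows from irreducibility of $\cM$ together with the generic injectivity of $\iota$.

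For $(2)\Leftrightarrow(3)$ the key inputs are two formulas recorded in \cite{CK99}. By the Bogomolov--Tian--Todorov theorem the deformations of the Calabi--Yau $V$ are unobstructed, so $\dim\cM=h^{n-2,1}(V)$, where $n=\dim\Delta$; Batyrev's formula evaluates this as
\[
h^{n-2,1}(V)=l(\Delta)-(n+1)-\sum_{\Gamma}l^*(\Gamma)+\sum_{\Theta}l^*(\Theta)\,l^*(\Theta^\circ),
\]
where $\Gamma$ runs over the facets of $\Delta$, $\Theta$ over its codimension-two faces, $\Theta^\circ$ is the dual face in $\Delta^\circ$, and $l^*(\cdot)$ denotes the number of relative-interior lattice points. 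Counting monomial deformations modulo $\tAut(X_{\Sigma^\circ})$ recovers exactly the polynomial part of this expression: since $\tAut(X_{\Sigma^\circ})$ has dimension $n+\sum_{\Gamma}l^*(\Gamma)$ and acts with generically finite stabilizers on the projectivized space of defining polynomials,
\[
\dim\cM_{\mathrm{poly}}=\bigl(l(\Delta)-1\bigr)-\Bigl(n+\sum_{\Gamma}l^*(\Gamma)\Bigr)=l(\Delta)-(n+1)-\sum_{\Gamma}l^*(\Gamma).
\]

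Subtracting, the $l(\Delta)$ and facet terms cancel and only the correction survives:
\[
\dim\cM-\dim\cM_{\mathrm{poly}}=\sum_{\Theta}l^*(\Theta)\,l^*(\Theta^\circ),
\]
summed over the codimension-two faces $\Theta$ of $\Delta$. In the threefold case $n=4$ these are exactly the two-dimensional faces, and $\dim\Theta+\dim\Theta^\circ=\dim\Delta-1=3$ makes each $\Theta^\circ$ an edge of $\Delta^\circ$. Every summand is a product of nonnegative integers, so the sum vanishes if and only if each summand does, i.e.\ if and only if $l^*(\Theta)=0$ or $l^*(\Theta^\circ)=0$ for every two-dimensional face $\Theta$ of $\Delta$; this is condition $(3)$, while the vanishing of the left-hand side is condition $(2)$. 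Hence $(2)\Leftrightarrow(3)$. The cancellation and the termwise non-negativity are routine, so I expect the real weight of the argument to sit in the two dimension formulas themselves---the identification $\dim\cM=h^{n-2,1}(V)$ through unobstructedness and, above all, Batyrev's evaluation of $h^{n-2,1}(V)$ together with the explicit description of $\tAut(X_{\Sigma^\circ})$ and the injectivity of the polynomial Kodaira--Spencer map. Granting these structural results of \cite{CK99}, the equivalences reduce to the elementary computation above.
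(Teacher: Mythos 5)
Your argument is correct and is essentially the proof behind the result the paper cites: the paper itself gives no argument, deferring entirely to Proposition 6.1.3 of \cite{CK99}, and your reconstruction---$(1)\Leftrightarrow(2)$ via generic injectivity of the polynomial Kodaira--Spencer map plus irreducibility of $\cM$, and $(2)\Leftrightarrow(3)$ by comparing Batyrev's formula for $h^{n-2,1}(V)$ with the dimension count $l(\Delta)-(n+1)-\sum_\Gamma l^*(\Gamma)$ for $\cM_{\mathrm{poly}}$ so that the discrepancy is the nonnegative correction term $\sum_\Theta l^*(\Theta)\,l^*(\Theta^\circ)$---is exactly the standard route taken there. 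The only inputs you do not prove (unobstructedness, Batyrev's formula, the automorphism-group dimension, injectivity of the polynomial Kodaira--Spencer map) are the same structural facts the cited reference relies on, so nothing is missing relative to the paper.
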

\begin{proof}
    See Proposition 6.1.3 of \cite{CK99}.
\end{proof}

\subsubsection{GKZ system}
A \textit{GKZ $A$-hypergeometric system} $\tau(A,\beta)$ is a $\cD_{\bC^{n}}$-module that is determined by a tuple $(A,\beta)$, where $A\in M_{m\times n}(\bZ)$ and $\beta\in \bZ^{m}$. For an introduction to the theory of $\cD$-module, we refer to \cite{HTT07}.

For any Calabi-Yau hypersurface inside a toric variety whose toric data is recorded in $(\frac{A}{B})$, one can associate it with a GKZ system $\tau(\bar A,\beta)$ that is constructed as follows. First we define the \textit{suspended fan} $\bar\Sigma$ of $\Sigma$, whose 1-dimensional cones are generated by $\bar \Xi :=(\Xi\cup\{0\})\times 1$. The toric data of the suspended fan will be recorded as $(\frac{\bar A}{\bar B})$. Then $\tau(\bar A,\beta)$ is the cyclic $\mathcal D_{\bC^{k+1}}$-module, determined by the left ideal $I_{GKZ}$ in Weyl algebra, which is generated by
\[
\prod_{l_i>0}\partial_{\lambda_i}^{l_i}-\prod_{l_j<0}\partial_{\lambda_j}^{l_j},\quad l\in\Lambda,
\]
and 
\[
\sum_{j=1}^{k+1}r_{i,j}\lambda_j\partial_j+\beta_i,\quad i=1,\dots,n+1,
\]
where $\lambda_i$ are the coordinates of $\bC^{k+1}$, and $\beta = (0,\dots,0,-1)$. Now, recall the general result of the GKZ system
\begin{theorem}[Hotta]
    Let $A\in M_{m\times n}(\bZ)$, if $(1,1,\dots,1)$ lies in the row span of $A$, then $\tau(A,\beta)$ is regular holonomic for any $\beta\in \bC^m$.
\end{theorem}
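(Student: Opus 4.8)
The plan is to split the assertion into its two halves, holonomicity and regularity, and to observe that the hypothesis on $A$ is exactly what converts a general holonomic GKZ module into a monodromic, hence regular, one. Holonomicity of $\tau(A,\beta)$ is the classical theorem of Gelfand--Kapranov--Zelevinsky and requires no extra input: passing to symbols, the left ideal $I_{GKZ}$ degenerates to the toric ideal generated by the $\prod_{l_i>0}\xi_i^{l_i}-\prod_{l_j<0}\xi_j^{-l_j}$ together with the linear forms $\sum_j a_{ij}\lambda_j\xi_j$, and the resulting characteristic variety is a conic Lagrangian supported on conormals to torus-orbit closures. I would quote this directly, so that the entire force of the statement is carried by the word \emph{regular}.

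The first real step is to read off the meaning of the hypothesis. Saying that $(1,\dots,1)$ lies in the row span of $A$ means there are scalars $c_1,\dots,c_m$ with $\sum_i c_i a_{ij}=1$ for every $j$; equivalently, all columns of $A$ lie on a single affine hyperplane not passing through the origin. Forming the corresponding combination of the Euler operators $\sum_j a_{ij}\lambda_j\partial_j+\beta_i$ produces the operator $E+\kappa$, where $E=\sum_j\lambda_j\partial_j$ is the Euler vector field and $\kappa=\sum_i c_i\beta_i$ is a constant. Hence $E+\kappa\in I_{GKZ}$, so $E$ acts on the cyclic generator as the scalar $-\kappa$ and acts locally finitely on all of $\tau(A,\beta)$. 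In other words, the hypothesis is precisely the statement that $\tau(A,\beta)$ is \emph{monodromic} for the standard scaling action of $\bC^*$ on $\bC^n$.

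I would then finish by invoking the theorem of Brylinski and of Hotta--Kashiwara that a monodromic holonomic $\cD$-module on a vector space is automatically regular holonomic. The underlying reason is that the solution complex of a monodromic module is conic (scaling-equivariant): because $\bC^n$ contracts equivariantly to the origin, the comparison between algebraic and analytic de Rham cohomology that defines regularity is forced, and equivalently the monodromy in the scaling direction is quasi-unipotent with no irregular (Stokes) part. This regularity along the scaling orbits then propagates to regularity along every component of the singular divisor together with the coordinate hyperplanes, which is what regular holonomicity demands.

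The step I expect to be the main obstacle is this last one, the implication monodromic $\Rightarrow$ regular: holonomicity is classical and the homogeneity computation is a one-line consequence of the hypothesis, whereas regularity is where the hypothesis genuinely earns its keep and constitutes the substance of Hotta's theorem. Making it self-contained would require developing the machinery of monodromic $\cD$-modules---specialization along $E$, local finiteness of the $E$-action, and the conic structure of the solution sheaf---or else verifying regularity directly along each boundary component through an indicial/$b$-function computation; either route is markedly heavier than the two preparatory steps.
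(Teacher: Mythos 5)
The paper offers no argument of its own here; its ``proof'' is just the citation to Hotta's paper. Your outline correctly reconstructs the argument of that reference: holonomicity is the classical GKZ/Adolphson result, the row-span hypothesis yields the Euler relation $E+\kappa\in I_{GKZ}$ and hence monodromicity for the scaling $\bC^*$-action, and regularity then follows from the Brylinski/Hotta--Kashiwara theorem that monodromic holonomic $\cD$-modules on a vector space are regular. This is essentially the same route as the cited source, and you have correctly identified the monodromic-implies-regular step as the real content.
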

\begin{proof}
    See \cite{Hot91}.
\end{proof}
Then the GKZ system we constructed above is always regular holonomic. Now consider, Let $j:(\bC^*)^{k+1}\hookrightarrow \bC^{k+1}$ be the immersion, and $\pi:(\bC^*)^{k+1}\rightarrow (\bC^*)^{r}$ be the projection under the action \eqref{Taction}.
Then $p_+j^*\tau(\bar A,\beta)$ is again regular holonomic by the general theory of $\cD$-modules, (see e.g. \cite{HTT07}). 
\begin{lemma}\label{GKZsys}
    $p_+j^*\tau(\bar A,\beta)$ is a regular holonomic and is determined by the left ideal sheaf $\mathcal I_{GKZ}\subset \cD_{(\bC^*)^r}$ that is locally generated by the following $r$ differential operators ($ i = 1,\dots,r$)
    \[
    P_i:=\prod_{j:w_{i,j}>0}\prod_{k=1}^{w_{i,j}
    }(\sum_{l=1}^rw_{l,j}\delta_l-k-\delta_{j,k+1})-z_i\prod_{j:w_{i,j}<0}\prod_{k=1}^{w_{i,j}
    }(\sum_{l=1}^rw_{l,j}\delta_l-k-\delta_{j,k+1}),
    \]
    where $\delta_l := z_l\partial_{z_l}$ with $z_l$, $l=1,\dots,r$ be the coordinates on $(\bC^*)^r$, and $\delta_{i,j}$ is the Kronecker symbol.
\end{lemma}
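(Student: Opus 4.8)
The plan is to separate the two assertions: regular holonomicity, which is essentially free, and the explicit list of generators, which is the real content.

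For regular holonomicity I would argue as follows. By construction of the suspended fan every element of $\bar\Xi$ has last coordinate $1$, so the vector $(1,\dots,1)$ is literally a row of $\bar A$; Hotta's theorem then gives that $\tau(\bar A,\beta)$ is regular holonomic. Regular holonomicity is preserved by the pullback $j^*$ along the open immersion and by the direct image $p_+$ along the projection $\pi$, since the regular holonomic derived category is stable under the six operations \cite{HTT07}. Hence $p_+j^*\tau(\bar A,\beta)$ is regular holonomic, and it remains to identify its annihilator ideal.

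To compute the generators I would carry out the classical reduction of a GKZ system to the quotient torus. First, restricting to the big torus makes every $\lambda_j$ invertible, so I replace $\partial_{\lambda_j}$ by the logarithmic field $\theta_j:=\lambda_j\partial_{\lambda_j}$ via $\lambda_j^m\partial_{\lambda_j}^m=\prod_{s=0}^{m-1}(\theta_j-s)$. Applied to a box operator and cleared of the powers of $\lambda$, this turns $\square_l$ into
\[
\prod_{l_j>0}\prod_{s=0}^{l_j-1}(\theta_j-s)\;-\;\lambda^{l}\prod_{l_j<0}\prod_{s=0}^{-l_j-1}(\theta_j-s),
\]
where the monomial $\lambda^l=\prod_j\lambda_j^{l_j}$ is exactly the quotient coordinate attached to the relation $l$. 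Next I would descend along $\pi$: choosing multiplicative coordinates adapted to $\pi$ (so that $z_l=\prod_j\lambda_j^{w_{l,j}}$ are the base coordinates), the direct image is computed by the relative de Rham complex along the fibre torus, and on the pushforward the Euler operators $\sum_j\bar r_{i,j}\theta_j+\beta_i$ enforce the substitution $\theta_j\mapsto\sum_{l=1}^r w_{l,j}\delta_l$ with $\delta_l=z_l\partial_{z_l}$ — this is just the chain rule for $\pi$-pullbacks. The monomial $\lambda^{w_i}$ becomes $z_i$, and the integer shifts $-k$ together with the Kronecker correction $-\delta_{j,k+1}$ in the factors of $P_i$ are produced by the falling-factorial rewriting and by the inhomogeneous value $\beta=(0,\dots,0,-1)$ carried by the suspension coordinate. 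Finally, since the rows $w_1,\dots,w_r$ of $\bar B$ form a $\bZ$-basis of the relation lattice $\Lambda$ and, on the torus, the box operator of a sum of two relations lies in the left ideal generated by the box operators of the summands, it is enough to record the $r$ operators attached to $w_1,\dots,w_r$, which are exactly the $P_i$.

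I expect the main obstacle to be the final identification rather than any single manipulation: on the one hand, pinning down the shifts and the Kronecker term $\delta_{j,k+1}$ requires careful tracking of how the inhomogeneous Euler operator and the distinguished suspension variable interact with the falling-factorial substitution; on the other hand — and this is the genuinely non-formal point — one must show that $P_1,\dots,P_r$ generate the whole ideal, i.e.\ that the tautological surjection $\cD_{(\bC^*)^r}/(P_1,\dots,P_r)\twoheadrightarrow p_+j^*\tau(\bar A,\beta)$ is an isomorphism and not merely onto. I would settle this by a holonomic-rank comparison, checking that the cyclic module on the left has the same generic rank as $p_+j^*\tau$ (equivalently, that no further relations survive the fibre integration because the Euler operators are annihilated in the pushforward).
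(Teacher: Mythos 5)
Your proposal is correct and follows essentially the same route as the paper, whose own proof is only a two-line pointer to \cite{GKZ,CK99}: you are writing out precisely the standard reduction those references perform (restriction to the torus, rewriting in logarithmic derivatives, descent along $\pi$ via the Euler operators, and reduction to a $\bZ$-basis of $\Lambda$). You are in fact more careful than the paper in flagging that one must still check the $P_i$ generate the full annihilator rather than merely lie in it, and your holonomic-rank comparison is the appropriate (and, in this paper, only generically conclusive) way to settle that point.
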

\begin{proof}
   By \cite{GKZ}, the singular locus (the principal $\bar A$-deteminant) of $j^*\tau(\bar A,\beta)$ is away from the origin, so locally near origin, we have $\pi$ is smooth. Then by \cite{HTT07}, the push forward along a smooth morphism is just a change of variables.  For more detail see \cite{GKZ,CK99}. 
\end{proof}

\begin{remark}
The GKZ system can be viewed as a special case of the tautological system \cite{LSY13}. In that regard, the suspension corresponds to including the Euler operator in the system and is the only part specialized to the anti-canonical section.   
\end{remark}
In what follows, we will refer to the $\cD$-module $p_+j^*\tau(\bar A,\beta)$ as the GKZ system, and denote it as $\tau_{GKZ}$.

The singular locus of the GKZ system $\tau_{GKZ}$ is the so-called \textit{principal $A$-determinant} and it has a factorization as
\[
E_A = \prod_{F\subset \Delta} D_{(\Delta^\circ\cap N)_0\cap F}^{m(F)},
\]
where $F$ runs through all the faces of $\Delta$, and $D_{(\Delta^\circ\cap N)_0\cap F}$ are the so-call $A$-discriminants. We refer to  \cite{GKZ,CK99} for the precise definition of the right-hand side. In practice, it is often easier to calculate the singular locus directly, which equals the projection of the characteristic variety minus the zero section.

\subsubsection{Secondary fan}
The space $\cM_{\mathrm{simp}}$ has a natural compactification given by the \textit{Chow quotient}\cite{KSZ91}
\[
\overline{\cM}_{\mathrm{simp}} := \bP(L((\Delta^\circ\cap N))_0)\sslash_c T.
\]
This is an $r$-dimensional toric variety whose toric data is denoted as $(\frac{A^s}{B^s})$, where the columns of $A^s$ are the distinct primitive vectors of the columns of $\overline B$. The fan $\Sigma^s$ of $\overline{\cM}_{\mathrm{simp}}$ is called the \textit{secondary fan}, and can be determined by calculating the \textit{GKZ decomposition}, see for example \cite{CK99}. In the case $r\leq 2$, the $\Sigma^s$ is completely determined by $\Sigma^s(1)$.

The GKZ system $\tau_{GKZ}$ thus can be viewed as sitting on the dense tori $(\bC^*)^r$ of $\overline{\cM}_{\mathrm{simp}}$. Or one can consider the \textit{minimal extension} $L(\tau_{GKZ},(\bC^*)^r)$ to $\overline{\cM}_{\mathrm{simp}}$. This is a holonomic $\cD_{\overline{\cM}_{\mathrm{simp}}}$-module whose singular locus is contains inside
\[
\mathrm{Sing}(\tau_{GKZ}) = E_A\cup D_{v_1^s}\cup \dots \cup D_{v_q^s},
\]
where $v_i^s$, $i=1,\dots,q$ are the volumes of $A^s$, and $D_{v_i^s}$ is the toric divisor of $\overline{\cM}_{\mathrm{simp}}$ associated to $v_i^s$.

The geometry of $\overline{\cM}_{\mathrm{simp}}- \cM_{\mathrm{simp}}$ is well-understood. In \cite{GKZ} the authors show $E_A$ intersection 
of each $D_{v_i^s}$ at exactly one point with possible multiplicity. Meanwhile, the intersections between the toric divisors are recorded in the secondary fan. Therefore, by blowing up the tangencies at $E_A\cap D_{v_i^s}$, one arrives at a compactification of $\cM_{\mathrm{simp}}$ with normal crossing boundary divisors.

\subsubsection{Picard-Fuchs system}
Let $\bP_{\Delta^\circ}$ be a toric variety with at most terminal singularity, and let $V\subset \bP_{\Delta^\circ}$ be a smooth anti-canonical hypersurface. We are interested in the tautological family $\mathcal C\rightarrow \cM$. The Picard-Fuchs system $\tau_{PF}(\omega)$ for $V$ is a $\cD_{\cM}$-module. At a generic non-singular point $b\in \cM$, the local holomorphic solution germ of the Picard-Fuchs system is a finite dimension $\bC$-vector space that is spanned by the period functions of a fixed local section $\omega$ of the relative canonical sheaf
$K_{\mathcal C/\cM}$.

If we further assume $\cM_{\mathrm{poly}}=\cM$, then $\cM_{\mathrm{simp}}$ is a (possibly ramified) finite cover of $\cM$. In this case, $\cM_{\mathrm{simp}}$ parameterized the isomorphism class of $V$ together with an isomorphism of $\bP_{\Delta^\circ}$. Thus $\cM_{\mathrm{simp}}$ still carries a tautological family $\mathcal C'\rightarrow \cM_{simp}$, and one can also construct a Picard-Fuchs system by fixing a local section of $K_{\mathcal C'/\cM_{\mathrm{simp}}}$. In \cite{BC94} a canonical invariant section of $K_{\mathcal C'/\cM}$ is defined as
\[
\omega = \mathrm{Res}_{f=0}\frac{\Omega}{f},
\]
where $\Omega$ is a $T$-invariant meromorphic holomorphic $m$-form on $\bP_{\Delta^\circ}$ that does not depend on the base point.

The construction of the GKZ system implies \cite{HKTY95,LSY13} that on $\cM_{\mathrm{simp}}$, $\tau_{PF}$ is a quotient module of $\tau_{GKZ}$. Since both of these $\cM_{\mathrm{simp}}$-modules are cyclic with generator $\omega$, then they have a canonical filtration
\[
F_\bullet\tau_{PF,GKZ}:=F_\bullet \cD_{\cM_{\mathrm{simp}}} \cdot \omega.
\]
In particular, they both defines a $\bC$-VHS on the non-singular locus, with $(\tau_{PF},F_\bullet\tau_{PF})\cong \mathcal (\bV,F^{m-1-\bullet} \bV)$, where $(\bV,F^\bullet \bV)$ is the $\bC$-VHS associated to the tautological family.

Now, for the case $m=4$, i.e., $V$ is a Calabi-Yau 3-fold, we have
\begin{lemma}\label{lemquotmum}
    If $\tau_{GKZ}$ has holonomic rank less than $4r+4$, and it has a quotient module $\cM$ with holonomic tank $2r+2$, such that $(\cM,F^\bullet\cM) $ has a MUM point, where $F^\bullet \cM$ is the induced filtration from the canonical filtration, then generically $\tau_{PF} \cong \cM$.
\end{lemma}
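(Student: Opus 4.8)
The plan is to pass to the generic point of $\cM_{\mathrm{simp}}$, where every holonomic $\cD$-module below becomes a finite-rank local system (a meromorphic connection), so that ``holonomic rank'' is the dimension of the space of flat sections and ``generically isomorphic'' means isomorphic as connections over the function field. First I would put $\tau_{PF}$ and $\cM$ on equal footing. By the identification $(\tau_{PF},F_\bullet\tau_{PF})\cong(\bV,F^{m-1-\bullet}\bV)$ recorded above, with $m=4$ and $h^{2,1}=r$, the Picard--Fuchs module has rank $\dim H^3=2+2h^{2,1}=2r+2$, exactly the rank hypothesized for $\cM$; moreover both $\tau_{PF}$ and $\cM$ are cyclic quotients of $\tau_{GKZ}$ generated by the common class $\omega$. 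Writing $K_{PF},K_{\cM}\subset\tau_{GKZ}$ for the two kernels, I would compare them through the largest common quotient $\mathcal{Q}:=\tau_{GKZ}/(K_{PF}+K_{\cM})$.

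The first substantive step is a dimension count forcing $\mathcal{Q}\ne 0$. The submodules $K_{PF},K_\cM$ fit into a short exact sequence
\[0\to \tau_{GKZ}/(K_{PF}\cap K_{\cM})\to \tau_{PF}\oplus\cM\to \mathcal{Q}\to 0,\]
in which the left term is a quotient of $\tau_{GKZ}$. Taking generic ranks and using $\mathrm{rank}\,\tau_{GKZ}<4r+4$ gives
\[\mathrm{rank}\,\mathcal{Q}=(2r+2)+(2r+2)-\mathrm{rank}\big(\tau_{GKZ}/(K_{PF}\cap K_{\cM})\big)\ \ge\ (4r+4)-\mathrm{rank}\,\tau_{GKZ}\ >\ 0.\]
Thus $\mathcal{Q}$ is a nonzero module that is simultaneously a cyclic quotient of $\tau_{PF}$ and of $\cM$, generated in each by the image of $\omega$; dually, the two solution local systems share a nonzero common sub-local-system containing the fundamental holomorphic solution at the MUM point.

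The heart of the argument, which I expect to be the main obstacle, is to upgrade ``nonzero common quotient'' to ``isomorphism,'' and this is where the MUM hypothesis is used. I would show that $\tau_{PF}$ (equivalently $\cM$) is irreducible as a $\cD$-module, so that the nonzero quotient $\tau_{PF}\twoheadrightarrow\mathcal{Q}$ is forced to be an isomorphism. Irreducibility follows from the Hodge structure at the MUM point: since $h^{3,0}=1$ the top piece $F^3$ is the line $\langle\omega\rangle$, and Griffiths transversality together with the non-vanishing of the Yukawa coupling at the MUM point (the content of $N^3\ne 0$) shows that $\omega$ and its iterated covariant derivatives $\nabla^a\omega$ already span the generic fibre; hence the smallest flat subbundle containing $\omega$ is the whole module. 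Polarizability then makes the underlying variation semisimple, so any proper sub-variation has a complementary sub-variation and the line $F^3$ lies in exactly one summand, which the spanning statement forces to be everything, leaving the complement zero. Granting this, $\mathcal{Q}\cong\tau_{PF}$ yields $K_{\cM}\subseteq K_{PF}$ and hence a surjection $\cM\twoheadrightarrow\tau_{PF}$ of connections of equal rank $2r+2$, which is a generic isomorphism. The delicate points to verify are the passage between sub-objects and quotient-objects, handled by the polarization coming from the intersection form, and that the Yukawa coupling is genuinely non-degenerate at the distinguished large-complex-structure boundary point so that $\omega$ generates the full rank-$(2r+2)$ module rather than a proper sub-variation.
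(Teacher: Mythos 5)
Your proposal is correct and follows essentially the same route as the paper's (much terser) proof: a rank count using $\mathrm{rank}\,\tau_{GKZ}<4r+4$ to produce a nonzero common quotient of the two rank-$(2r+2)$ quotients $\tau_{PF}$ and $\cM$, which is then upgraded to a generic isomorphism by irreducibility --- the paper obtains irreducibility from the MUM point by citing \cite{Chen23}, while you re-derive it via Deligne semisimplicity together with cyclicity of the module generated by $\omega\in F^3$ and $\dim F^3=1$. The one loose point is that you attribute the MUM point (and the Yukawa-coupling nonvanishing) to $\tau_{PF}$, whereas the hypothesis places it on $\cM$; this is harmless since your semisimplicity argument applies verbatim to $\cM$, whose Hodge--Tate LMHS at the MUM point forces $\dim F^3=1$.
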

\begin{proof}
    For the definition of MUM point see Definition \ref{defmum}. As pointed out in \cite{Chen23}, the existence of a MUM point implies the VHS is irreducible. Then since $\tau_{PF}$ is a quotient module of $\tau_{GKZ}$ with holonomic rank $2r+2$, thus generically it has to isomorphic to $\cM$.
\end{proof}

We also note that in \cite{HLY16}, the authors showed that for a smooth Calabi-Yau $(m-1)$-fold $V\subset \bP_{\Delta}$, the holonomic rank of $\tau_{GKZ}$ is equals to $\dim(H^m(\bP_{\Delta^\circ}- V,\bC))$, which is greater than $\dim(H^{m-1}(V,\bC))$.

\subsection{Hodge theory}

Throughout this subsection we fix the tuple 
\begin{center}
$(H_\bZ, Q, \{h^{p,q}\}_{p+q=l}, D)$,
\end{center}
where $(H_\bZ, Q)$ is an integral polarized lattice and $\{h^{p,q}\}_{p+q=l}$ is a set of Hodge numbers for some polarized Hodge structure of weight $l$ on $(H_\bZ, Q)$ and $D$ is the corresponding period domain. We also denote $D=G_\bR/K\subset \check{D}=G_\bC/P$ where $G:=\mathrm{Aut}(H, Q)$.

Let $S$ be a smooth quasi-projective variety, $\bar{S}$ be projective with $\bar{S}\backslash S$ a simple normal crossing divisor. We assume there is a $\mathbb{Z}$-local system $\bV$ on $S$ which induces a polarized variation of Hodge structures (PVHS) of given type. This induces a period map:
\begin{equation}\label{periodmapgeneralform}
    \varphi: S\rightarrow \Gamma \backslash D
\end{equation}
 where $D$ is the classifying space of $\bZ$-polarized Hodge structures of type $(H_\bZ, Q, \{h^{p,q}\}_{p+q=l})$, and $\Gamma\leq G=\mathrm{Aut}(H_\bZ, Q)$ is the monodromy group. 

 These abstract settings have natural models in algebraic geometry: Suppose 
 \begin{equation}
     \pi: \mathcal{X}\rightarrow S
 \end{equation}
is a smooth projective family where $\mathcal{X}$ is smooth and $\pi$ is a holomorphic proper submersion. Moreover suppose for any $s\in S$, the fiber $X_s:=\pi^{-1}(s)$ is a smooth projective variety of fixed dimension $\mathrm{dim}_{\mathbb{C}}X_s=l$. Denote $\mathbb{Z}_{\mathcal{X}}$ as the $\mathbb{Z}$-constant sheaf on $\mathcal{X}$, then 
\begin{equation}
    \mathbb{V}:=R^l\pi_*(\mathbb{Z}_{\mathcal{X}})
\end{equation}
is a $\mathbb{Z}$-local system over $S$ whose fiber at $s\in S$ is $H:=H^l(X_s, \mathbb{Z})/\mathrm{torsion}$. Take the Hodge decomposition 
\begin{equation}
    H_{\mathbb{C}}=H:=(H^l(X_s, \mathbb{Z})/\mathrm{torsion})\otimes \mathbb{C}=\oplus_{p+q=l}H^{p,q}(X_s)
\end{equation}
on each fiber into consideration, this gives a variation of Hodge structure of weight $l$ on $S$ as well as a period map of the form \eqref{periodmapgeneralform}.

For any $s\in \olS$, a local neighborhood around $s\in U\subset \olS$ satisfies $U\cap S\cong (\Delta^*)^k\times \Delta^{n-k}$ on which we may consider the local monodromy operators $\{T_i\}_{1\leq i\leq k}$ and their logarithms. One of the main ingredients is the following intepretation of Schmid's nilpotent orbit theorem:
\begin{theorem}\label{Thm:SchmidNilp}
    For every $s\in \olS$, up to the action of $\Gamma$ there is a nilpotent orbit $(\sigma_s, F_s)$ canonically associated to $s$. Here $\sigma_s$ is the local monodromy nilpotent cone and $F_s\in \check{D}$.  
\end{theorem}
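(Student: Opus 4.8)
The plan is to package Schmid's nilpotent orbit theorem \cite{Sch73} in a polydisc neighborhood of $s$ and then record that the resulting data is well-defined up to the $\Gamma$-action. Fix a chart $U\ni s$ with $U\cap S\cong(\Delta^*)^k\times\Delta^{n-k}$ and coordinates $(t_1,\dots,t_k,w)$, so that the boundary is locally $t_1\cdots t_k=0$ and $s$ lies on the intersection of the $k$ branches $\{t_j=0\}$. First I would pass to the universal cover $\mathfrak H^k\times\Delta^{n-k}$ through $z_j=\frac{1}{2\pi i}\log t_j$ and lift the period map to a holomorphic $\tilde\varphi:\mathfrak H^k\times\Delta^{n-k}\to D\subset\check D$ satisfying the monodromy equivariance $\tilde\varphi(\dots,z_j+1,\dots)=T_j\,\tilde\varphi(\dots,z_j,\dots)$. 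By the monodromy theorem each $T_j$ is quasi-unipotent; after a finite base change $t_j\mapsto t_j^{m_j}$, which does not move $s$, I may assume each $T_j$ is unipotent and set $N_j:=\log T_j\in\fg:=\mathrm{Lie}(G)$. Because $\pi_1((\Delta^*)^k)$ is abelian the $T_j$, hence the $N_j$, commute, so
\[
\sigma_s:=\tspan_{\bR_{\geq0}}\{N_1,\dots,N_k\}
\]
is a well-defined cone of commuting nilpotents.

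Next I would untwist: the map
\[
\psi(z,w):=\exp\Bigl(-\sum_{j=1}^k z_jN_j\Bigr)\,\tilde\varphi(z,w)
\]
is invariant under each $z_j\mapsto z_j+1$ (using $T_j=\exp N_j$ and the commutativity of the $N_j$), so it descends to a holomorphic map $(\Delta^*)^k\times\Delta^{n-k}\to\check D$. Schmid's theorem then asserts that this descended map extends holomorphically across $\{t_1\cdots t_k=0\}$, and I would define $F_s\in\check D$ to be its value at the origin. The same circle of results --- Schmid \cite{Sch73} together with the several-variable $\mathrm{SL}_2$-orbit theorem \cite{CKS86} --- supplies the two defining properties of a nilpotent orbit: for any $N$ in the interior of $\sigma_s$ the weight filtration $W(N)$ paired with $F_s$ is a mixed Hodge structure polarized by $N$, and $\exp(\sum_j z_jN_j)F_s\in D$ once all $\mathrm{Im}(z_j)\gg0$. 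This is precisely the assertion that $(\sigma_s,F_s)$ is a nilpotent orbit.

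Finally I would establish canonicity up to $\Gamma$, which is the only genuinely new content. The construction used (i) a lift $\tilde\varphi$, (ii) the local coordinates, and (iii) the auxiliary base change. Changing the lift replaces $\tilde\varphi$ by $\gamma\cdot\tilde\varphi$ for a monodromy element $\gamma\in\Gamma$, sending $(\sigma_s,F_s)$ to $(\tAd(\gamma)\sigma_s,\gamma F_s)$ and hence fixing its $\Gamma$-orbit; the cone $\sigma_s$ depends only on the unordered collection of branches through $s$ and is unchanged by rescaling the $t_j$, while the residual ambiguity in $F_s$ is reabsorbed by the $\exp(\sum z_jN_j)$ factor and remains within one $\Gamma$-orbit. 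I expect the main obstacle to lie not in producing the limit --- that is the analytic heart of Schmid's theorem, which I treat as a black box --- but in this bookkeeping, and especially in checking that the quasi-unipotent normalization and the finite base change of step (iii) interact with $\Gamma$ so that the deck group of the cover is absorbed into $\Gamma$ and no information is lost in passing to $\Gamma$-orbits.
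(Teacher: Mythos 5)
Your construction is exactly the standard unpacking of Schmid's nilpotent orbit theorem that the paper itself relies on: the paper gives no independent proof, but its two ``interpretations'' following the statement (the untwisted map $\psi(t_i)=\exp(\sum-\frac{\log t_i}{2\pi i}N_i)F_s$ and the limiting mixed Hodge structure $(W(\sigma_s),F_s)$ via \cite{Sch73} and \cite{CK82}) are precisely the objects you build. Your additional bookkeeping on $\Gamma$-canonicity (absorbing the choice of lift into $\Gamma$ and the coordinate ambiguity into $\exp(\sigma_{s,\mathbb{C}})$) is correct and consistent with the paper's later use of the $\exp(\sigma_{s,\mathbb{C}})\Gamma$-class.
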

\noindent There are two ways to intepretate nilpotent orbit $(\sigma_s, F_s)$:
\begin{enumerate}
    \item Suppose $\sigma_s=\langle N_1,...,N_r\rangle$, for local coordinates $\{t_i\}$ around $s$, the map $\psi(t_i)=\exp(\sum -\frac{\log(t_i)}{2\pi i}N_i)F_s$ approximates the local lift of $\varphi$ at $s$ (\cite[Sec. 4]{Sch73}).
    \item By \cite{CK82}, for every $s\in \olS$ there is a monodromy weight filtration $W_s:=W(\sigma_s)$ associated to $s$, such that $(W_s, F_s, \sigma_s)$ is a limiting mixed Hodge structure (LMHS).
\end{enumerate}
We give a better interpretation of (1) above. Suppose $U\subset \olS$ is an open subset such that $U\cap S\cong (\Delta^{*})^k\times\Delta^l$. Let $\mathfrak{H}$ be the upper-half plane $\{\mathfrak{Im}(z)>0\}$. The local period map $\Phi$ on $U\cap S$ has the local lift:
\begin{equation}
\begin{tikzcd}
\mathfrak{H}^k\times \Delta^l\arrow[d, ""] \arrow[r, "\tilde{\Phi}"] & D \arrow[d] \\
(\Delta^{*})^k\times\Delta^l \arrow[r, "\Phi"] & \Gamma \backslash D
\end{tikzcd}
\end{equation}
If we denote $z=(z_k), w=(w_l)$ as coordinates on $\mathfrak{H}^k$, $\Delta^l$,
\begin{equation}\label{eqn:localliftperiodmap}
    \tilde{\Phi}=\exp(\sum_{1\leq j\leq k}z_jN_j)\psi(e^{2\pi iz}, w),
\end{equation}
where $\psi(z,w)\in D$ is holomorphic over $\Delta^{k+l}$. Schmid's nilpotent orbit theorem \cite[Thm. 4.12]{Sch73} also says:
\begin{theorem}\label{Thm:SchmidNilp2}
    For any $w\in\Delta^l$, $(\sigma:=\langle N_1,...,N_k\rangle, \psi(0, w))$ is a nilpotent orbit, and under the canonical metric $d$ on $D$, as $\mathfrak{Im}(z)\rightarrow \infty$,
    \begin{equation}
        d(\exp(\sum_{1\leq j\leq k}z_jN_j)\psi(0, w), \tilde{\Phi}(e^{2\pi iz},w)) \sim e^{-b\mathfrak{Im}(z)}
    \end{equation}
    for some $b>0$.
\end{theorem}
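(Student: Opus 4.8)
The plan is to follow Schmid's original argument \cite{Sch73}, which recasts the statement as a removable-singularity problem controlled by an a priori estimate coming from the curvature of the period domain. First I would reduce to unipotent monodromy: by the monodromy theorem each $T_j$ is quasi-unipotent, so after a finite base change $t_j\mapsto t_j^{m_j}$ (which alters neither the associated nilpotent orbit nor the nature of the estimate) I may assume each $T_j=\exp(N_j)$ is unipotent. The relation $\tilde\Phi(z+e_j,w)=\exp(N_j)\tilde\Phi(z,w)$ then shows that the ``untwisted'' map
\begin{equation*}
    \psi(t,w):=\exp\Bigl(-\sum_{1\le j\le k}z_jN_j\Bigr)\tilde\Phi(e^{2\pi i z},w),\qquad t_j=e^{2\pi i z_j},
\end{equation*}
is invariant under each $z_j\mapsto z_j+1$, hence descends to a single-valued holomorphic map $(\Delta^*)^k\times\Delta^l\to\check D$ into the compact dual. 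With this in hand, proving the theorem amounts to (i) extending $\psi$ holomorphically across $\{t_j=0\}$, (ii) checking that $F(w):=\psi(0,w)$ generates a nilpotent orbit, and (iii) upgrading the extension to the exponential estimate.

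The analytic heart, and the step I expect to be the main obstacle, is the a priori growth estimate for $\tilde\Phi$ near the boundary. Here I would invoke the Griffiths--Schmid curvature computation: along horizontal holomorphic directions the $G_\bR$-invariant metric on $D$ has holomorphic sectional curvature bounded above by a negative constant, so by the Ahlfors--Schwarz lemma the horizontal holomorphic map $\tilde\Phi$ is distance-decreasing from the Poincar\'e metric on $(\Delta^*)^k\times\Delta^l$ into $D$. Since the Poincar\'e distance from a fixed basepoint to a point with coordinate $t$ grows only logarithmically in each $\mathfrak{Im}(z_j)$, the $D$-distance $d(p_0,\tilde\Phi(z,w))$ grows at most like $\log\bigl(\max_j\mathfrak{Im}(z_j)\bigr)$. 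Translated into the flat affine coordinates of a chart of $\check D$, where a change of $d$-distance $\rho$ costs a coordinate change of size $e^{C\rho}$, and combined with the polynomially-growing factor $\exp(-\sum z_jN_j)$, this forces $\psi(t,w)$ to have at most polynomial growth in $\mathfrak{Im}(z_j)$, equivalently at most a pole-type singularity in the $t_j$, as $t\to 0$.

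Granting the estimate, step (i) follows from a Riemann-type removable-singularity argument: expanding the single-valued holomorphic $\psi$ in a Fourier series in $\mathfrak{Re}(z_j)$, the polynomial growth in $\mathfrak{Im}(z_j)$ kills all negative-frequency coefficients, so $\psi$ extends holomorphically across each divisor $\{t_j=0\}$ and $F(w)=\psi(0,w)\in\check D$ is well-defined. For step (ii), horizontality $N_jF^p(w)\subset F^{p-1}(w)$ is obtained by passing Griffiths transversality to the limit $t\to 0$, while the positivity condition $\exp(\sum z_jN_j)F(w)\in D$ for $\mathfrak{Im}(z_j)\gg0$ follows because the orbit is uniformly close to $\tilde\Phi$, whose image lies in $D$; together these are exactly the defining properties of the nilpotent orbit $(\sigma,F(w))$. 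Finally, for step (iii), holomorphicity of $\psi$ at the origin gives $\psi(t,w)-F(w)=O\bigl(\sum_j|t_j|\bigr)=O\bigl(e^{-2\pi\min_j\mathfrak{Im}(z_j)}\bigr)$ in the affine chart; applying $\exp(\sum z_jN_j)$, whose entries grow only polynomially in $z$, and comparing with the $D$-metric $d$ shows that
\begin{equation*}
    d\Bigl(\exp\bigl(\textstyle\sum_{1\le j\le k}z_jN_j\bigr)F(w),\ \tilde\Phi(e^{2\pi i z},w)\Bigr)\lesssim e^{-b\,\mathfrak{Im}(z)}
\end{equation*}
for a suitable $b>0$, the polynomial factor being absorbed into a slightly smaller exponent. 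The one additional complication in the several-variable setting is that all of these estimates must be made uniform in $w$ and jointly in the $z_j$; I would handle this either by induction on $k$ (freezing all but one puncture and invoking the one-variable result) or by working directly with the polydisk Poincar\'e metric, but the analytic core remains the single negative-curvature estimate above.
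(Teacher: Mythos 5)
The paper offers no proof of this statement at all---it is quoted verbatim as Schmid's nilpotent orbit theorem with a citation to \cite[Thm.~4.12]{Sch73}---and your sketch is a faithful reconstruction of Schmid's original argument (untwisting by the unipotent monodromy, the Ahlfors--Schwarz/distance-decreasing estimate from the Griffiths--Schmid curvature computation, removal of the singularity via vanishing of negative Fourier coefficients, and the refined exponential estimate with the polynomial factor absorbed). So your proposal is correct and takes essentially the same route as the source the paper relies on.
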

For a general mixed Hodge structure $(W, F)$ on $(H_\bZ, Q)$, there is a canonical splitting by Deligne:
\begin{equation}\label{eqn:delignesplitting}
    H_\bC=\oplus_{p,q}H^{p,q}
\end{equation}
such that $W_lH_\bC=\oplus_{p+q\leq l}H^{p,q}$ and $F^lH_\bC=\oplus_{p\geq l}H^{p,q}$. We have the following definitions:
\begin{definition}\label{Def:MHSType}
   The natural numbers $\{h^{p,q}:=\mathrm{dim}H^{p,q}\}$ are called the Hodge numbers of the mixed Hodge structure. We say two mixed Hodge structures on $(H_\bZ, Q)$ have the same type if they have the same Hodge numbers. In particular, we say a mixed Hodge structure $(W,F)$ has the type given by $\{h^{p,q}\}$ if $\{h^{p,q}\}$ are Hodge numbers of it.
\end{definition}
\begin{remark}
    When $\sigma=\langle N_1, N_2\rangle$ and $F\in \check{D}$ such that $(\sigma, F)$ is a nilpotent orbit, we say $(\sigma, F)$ is of type $\langle A|B|C\rangle$ if $(N_1, F), (\sigma, F), (N_2, F)$ are of type $A,B,C$ correspondingly.
\end{remark}
Therefore, the period map \eqref{periodmapgeneralform} associates each $s\in \olS$ a $\exp(\sigma_{s,\bC})\Gamma$-class of LMHS $(W_s, F_s, \sigma_s)$ which gives a type of LMHS. It is known that the type of LMHS at all $s\in \olS$ is constant along each stratum.

We refer to \cite{KPR19} for classifying all possible Hodge diamonds of LMHS. For the sake of convenience, we list all possible LMHS types for the weight $3$ period domain of Hodge type $(1,r,r,1)$, $r\geq 1$ in terms of Hodge-Deligne diagrams (See also \cite[Example 5.8]{KPR19}). In particular, the type $\mathrm{IV}_r$ LMHS is also called the Hodge-Tate type degeneration. The following definition from \cite{CK99} will be useful in the rest of this paper.
\begin{definition}[MUM point]\label{defmum}
    Let $s\in \overline S-S$. Then $s$ is a maximal unipotent monodromy (MUM) point if the following is true
    \begin{enumerate}
        \item $s \in D_1\cap \dots \cap D_r$, where $D_i\subset \overline S -S$ are distinct irreducible divisors of $\overline S$.
        \item  Denote the monodromy operator associated to $D_i$ as $T_i$. Then $T_i$ are unipotent. 
        \item Denote $N_i:=\log(T_i)$, and the cone formed by them as $\sigma_s$. The LMHS $(W_s,F_s,\sigma_s)$ associated to $s$ is Hodge-Tate.
        \item Let $e_0$ be the generator of $(W_s)_0$, and $e_0,e_1,\dots,e_r$ form a basis of $(W_s)_2$. Denote $(m_{ij})$ as the $r\times r$ matrix defined via $N_ie_j = m_{ij}e_0$. Then $(m_{ij})$ is invertible.
    \end{enumerate}
\end{definition}

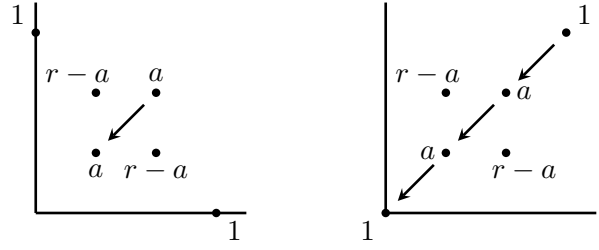
\begin{figure}

\centering
 \begin{tikzpicture}[scale=0.8]
    \draw[-,line width=1.0pt] (0,0) -- (0,3.5);
    \draw[-,line width=1.0pt] (0,0) -- (3.5,0);
    \fill (0,3) circle (2pt);
    \node at (-0.3,3.3) {$1$};

    \fill (1,2) circle (2pt);
    \node at (0.7,2.3) {$r-a$};
    
    \fill (1,1) circle (2pt);
    \node at (1,0.7) {$a$};
    
    \fill (3,0) circle (2pt);
    \node at (3.3,-0.3) {$1$};

     \fill (2,1) circle (2pt);
     \node at (2,0.7) {$r-a$};

    \fill (2,2) circle (2pt);
    \node at (2,2.3) {$a$};
    
    \draw [-stealth](1.8,1.8) -- (1.2,1.2) [line width = 1.0pt];
\end{tikzpicture}
    $
    \mspace{50mu}
    $
\begin{tikzpicture}[scale=0.8]
    \draw[-,line width=1.0pt] (0,0) -- (0,3.5);
    \draw[-,line width=1.0pt] (0,0) -- (3.5,0);
    \fill (0,0) circle (2pt);
    \node at (-0.3,-0.3) {$1$};
    
    \fill (1,1) circle (2pt);
    \node at (0.7,1) {$a$};

     \fill (1,2) circle (2pt);
    \node at (0.7,2.3) {$r-a$};
    
    \fill (3,3) circle (2pt);
    \node at (3.3,3.3) {$1$};
    
    \fill (2,1) circle (2pt);
    \node at (2.3,0.7) {$r-a$};
    
    \fill (2,2) circle (2pt);
    \node at (2.3,2) {$a$};
    
    \draw [-stealth](2.8,2.8) -- (2.2,2.2) [line width = 1.0pt];
    \draw [-stealth](1.8,1.8) -- (1.2,1.2) [line width = 1.0pt];
    \draw [-stealth](0.8,0.8) -- (0.2,0.2) [line width = 1.0pt];
    \end{tikzpicture}
    \caption{Type $\mathrm{I}_a$ and $\mathrm{IV}_a$ LMHS for $0\leq a\leq r$}
\end{figure}
\begin{figure}
\centering
\begin{tikzpicture}[scale=0.8]
    \draw[-,line width=1.0pt] (0,0) -- (0,3.5);
    \draw[-,line width=1.0pt] (0,0) -- (3.5,0);
    \fill (1,3) circle (2pt);
    \node at (1,3.3) {$1$};

    \fill (0,2) circle (2pt);
    \node at (-0.3,2) {$1$};
    
    \fill (1,1) circle (2pt);
    \node at (1,0.7) {$a$};

     \fill (1,2) circle (2pt);
    \node at (1,2.3) {$b$};
    
    \fill (3,1) circle (2pt);
    \node at (3.3,1) {$1$};

     \fill (2,0) circle (2pt);
     \node at (2,-0.3) {$1$};
    
    \fill (2,1) circle (2pt);
    \node at (2,0.7) {$b$};
    
    \fill (2,2) circle (2pt);
    \node at (2,2.3) {$a$};
    
    \draw [-stealth](1.8,1.8) -- (1.2,1.2) [line width = 1.0pt];
    \draw [-stealth](0.8,2.8) -- (0.2,2.2) [line width = 1.0pt];
    \draw [-stealth](2.8,0.8) -- (2.2,0.2) [line width = 1.0pt];
\end{tikzpicture}
    $
    \mspace{50mu}
    $
\begin{tikzpicture}[scale=0.8]
    \draw[-,line width=1.0pt] (0,0) -- (0,3.5);
    \draw[-,line width=1.0pt] (0,0) -- (3.5,0);
    \fill (2,3) circle (2pt);
    \node at (2,3.3) {$1$};

    \fill (0,1) circle (2pt);
    \node at (-0.3,1) {$1$};
    
    \fill (1,1) circle (2pt);
    \node at (1,0.7) {$a$};

     \fill (1,2) circle (2pt);
    \node at (1,2.3) {$b$};
    
    \fill (3,2) circle (2pt);
    \node at (3.3,2) {$1$};

     \fill (1,0) circle (2pt);
     \node at (1,-0.3) {$1$};
    
    \fill (2,1) circle (2pt);
    \node at (2,0.7) {$b$};
    
    \fill (2,2) circle (2pt);
    \node at (2,2.3) {$a$};
    
    \draw [-stealth](1.8,1.8) -- (1.2,1.2) [line width = 1.0pt];
    
    \draw [-stealth](1.8,2.8) -- (1.2,2.2) [line width = 1.0pt];
    \draw [-stealth](0.8,1.8) -- (0.2,1.2) [line width = 1.0pt];
    
    \draw [-stealth](2.8,1.8) -- (2.2,1.2) [line width = 1.0pt];
    \draw [-stealth](1.8,0.8) -- (1.2,0.2) [line width = 1.0pt];
    \end{tikzpicture}
    \caption{Type $\mathrm{II}_a$ and $\mathrm{III}_a$ LMHS for $a+b=h-1$}
\end{figure}
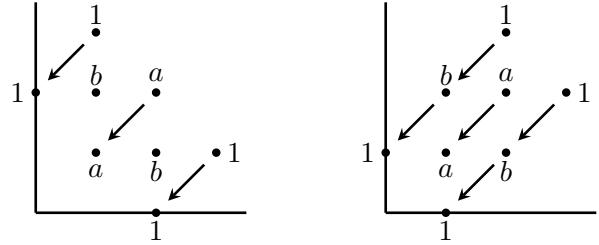

\section{Generic degree of two-parameter period mappings}\label{sec03}

In this section we assume $S$ is an algebraic surface admitting a projective compactification $\hat{S}$ which is smooth and $\hat{S}-S$ is a normal crossing divisor. $\mathbb{V}\rightarrow S$ is a PVHS with associated period map $\Phi: S\rightarrow \Gamma\backslash D$. We also assume the monodromy operator arouond each irreducible boundary divisor is unipotent which can always be done after a finite base change.

The main theorem of \cite{DR23} provides a way to realize $\Phi$ as the restriction of a morphism between compact complex analytic spaces:

\begin{theorem} \label{Thm:DR23main}
Suppose $\Gamma$ is neat, there exists a smooth compactification $\overline{S} \supset S$ with simple normal crossing divisor $\partial S = \overline{S} \backslash S$, and a logarithmic manifold $\Gamma \backslash D_\Sigma$ parameterizing $\Gamma$--conjugacy classes of nilpotent orbits on $D$ so that $\Gamma \bs D \subset \Gamma \backslash D_\Sigma$ and the period map extends to a morphism $\Phi_\Sigma : \overline{S} \to \Gamma \backslash D_\Sigma$ of logarithmic manifolds.  The image $\Phi_\Sigma(\overline{S})$ is a compact algebraic space. 
\end{theorem}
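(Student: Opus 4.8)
The plan is to take the compactification $\overline S$, the logarithmic manifold $\Gamma\backslash D_\Sigma$, and the extension $\Phi_\Sigma:\overline S\to\Gamma\backslash D_\Sigma$ as already furnished by the Kato--Nakayama--Usui construction (whose existence in the two-parameter setting is the substance of \cite{DR23}), and to devote the argument entirely to the final assertion, that $\Phi_\Sigma(\overline S)$ is a \emph{compact algebraic space}. Compactness is immediate: $\overline S$ is smooth projective, hence compact in the strong topology, and $\Phi_\Sigma$ is a morphism of logarithmic manifolds, in particular continuous, so its image is compact. Since $\overline S$ is compact, $\Phi_\Sigma$ is moreover a closed, hence proper, map.

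The next step is to equip the image with a reduced complex-analytic structure. Working in the local charts in which $\Gamma\backslash D_\Sigma$ is modeled on a locally closed subset of a product of a toric variety with an open piece of $\check D$, I would apply Remmert's proper mapping theorem (in the form valid for the underlying analytic space of a logarithmic manifold) to conclude that $\Phi_\Sigma(\overline S)$ is a compact analytic subspace. The neatness of $\Gamma$ enters precisely here: it guarantees that the strong topology on $\Gamma\backslash D_\Sigma$ is Hausdorff and that the quotient is a genuine logarithmic manifold, so that the image is a Hausdorff analytic space to which the proper mapping theorem applies. Passing to the normalization, I may further assume the image is normal; since the Moishezon property is a bimeromorphic invariant of compact complex spaces, it suffices to establish it for this normalization.

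The main obstacle is the upgrade from \emph{compact analytic} to \emph{algebraic}, and the plan is to show the image is Moishezon and then invoke Artin's equivalence between Moishezon spaces and proper algebraic spaces over $\bC$. The interior $U:=\Phi(S)$ is the image of the period map on the quasi-projective surface $S$, which by the o-minimal GAGA theorem of Bakker--Brunebarbe--Tsimerman is a quasi-projective variety; since $S$ is dense in $\overline S$, the set $U$ is a dense open subset of $\Phi_\Sigma(\overline S)$ whose complement is a thin analytic subset contained in $\Phi_\Sigma(\overline S\setminus S)$. Fixing a projective closure $U\hookrightarrow\overline U\subset\bP^N$, the resulting holomorphic map $U\to\bP^N$ extends, by the classical theorem on extension of meromorphic maps into projective space across a thin analytic subset of a normal complex space, to a meromorphic map $\Phi_\Sigma(\overline S)\dashrightarrow\bP^N$ that is generically one-to-one onto $\overline U$. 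Thus $\Phi_\Sigma(\overline S)$ is bimeromorphic to the projective variety $\overline U$, hence Moishezon, and Artin's theorem then produces a proper algebraic space over $\bC$ with the required analytification. The delicate point, where I expect the bulk of the technical work to lie, is the boundary behaviour: verifying that the meromorphic extension is genuinely controlled along the nilpotent-orbit strata, and that the normalization does not disturb the identification of $U$ with its quasi-projective interior — in short, that the algebraic structure on $U$ is compatible with the analytic structure of $\Phi_\Sigma(\overline S)$ along $\Phi_\Sigma(\overline S\setminus S)$.
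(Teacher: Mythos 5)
The paper never proves this statement: it is quoted verbatim as the main theorem of \cite{DR23}, so there is no internal proof to compare against, and your proposal must be judged on its own merits. On those merits it has two genuine gaps. First, it is circular. You take $\overline S$, the logarithmic manifold $\Gamma\backslash D_\Sigma$, and the extension $\Phi_\Sigma$ as ``already furnished by the Kato--Nakayama--Usui construction,'' but that is the theorem. The KNU machinery produces $\Gamma\backslash D_\Sigma$ and an extension of $\Phi$ only after one exhibits a weak fan $\Sigma$, compatible with $\Gamma$, containing every local monodromy cone of the period map (after a suitable sequence of blow-ups of the given compactification); constructing such a fan is precisely the hard content of \cite{DR23}, is special to the two-parameter case, and is exactly what your proposal assumes away. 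What you actually address is only the final sentence of the statement.

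Second, even for that final sentence the key analytic step fails as written. Granting that $\Phi_\Sigma(\overline S)$ is a compact analytic space (note this already requires Usui's theorem on images of extended period maps, cited in this paper as \cite{Usu06}, rather than Remmert: $\Gamma\backslash D_\Sigma$ is a logarithmic manifold, not a complex analytic space, so the proper mapping theorem does not apply directly), you then extend $U=\Phi(S)\hookrightarrow\bP^N$ to a meromorphic map on all of $\Phi_\Sigma(\overline S)$ by a ``classical theorem on extension of meromorphic maps across a thin analytic subset.'' No such theorem exists when the thin set has codimension one (consider $e^{1/z}$ on $\Delta^*$), and here the boundary $\Phi_\Sigma(\overline S\setminus S)$ contains curves. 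The standard repair works from the domain side instead: the composite $S\to\wp\hookrightarrow\overline\wp$, with $\overline\wp$ a projective closure of the image (quasi-projective by \cite{BBT22}), is a rational map from the projective surface $\overline S$, hence becomes a morphism after a modification $\tilde S\to\overline S$; the image of $\tilde S$ in $\overline\wp\times\Phi_\Sigma(\overline S)$ is then a compact analytic graph exhibiting a bimeromorphism between $\Phi_\Sigma(\overline S)$ and $\overline\wp$, so $\Phi_\Sigma(\overline S)$ is Moishezon and Artin's theorem yields the algebraic space structure. With that substitution, and Usui cited in place of Remmert, your outline of the algebraicity step becomes sound; the existence half of the theorem, however, still has to be taken from \cite{DR23}.
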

\begin{remark}
    More precisely, $\olS$ can be obtained by a (finite) sequence of blow-ups of $\hat{S}$ along codimensional $2$ boundary strata. In general for any given $S$ and $\hat{S}$, it is unrealistic to identify $\olS$ exactly.
\end{remark}

The image of $\Phi_\Sigma(s)$ for $s\in \olS-S$ can be described as follows. Let $\Sigma$ be the corresponding weak fan, $(\sigma_s, F_s)$ mod $\Gamma$ be the nilpotent orbit associated to $s$ by $\Phi$ via Schmid's nilpotent orbit theorem. There is a unique minimal $\tau_s\in \Sigma$ such that $\sigma_s\subset \tau_s$ and $(\tau_s, F_s)$ is a nilpotent orbit.

Let $\wp:=\mathrm{Img}(\Phi)\subset \Gamma\backslash D$, it is well-known that $\wp$ is a complex analytic space (and quasi-projective by \cite{BBT22}). Moreover, Theorem \ref{Thm:DR23main} implies there exists a compact complex analytic space $\wp_\Sigma\subset \Gamma\backslash D_\Sigma$ such that we have the following diagram in the category of complex analytic spaces:
\begin{equation}
\begin{tikzcd}
S \arrow[d] \arrow[r, "\Phi"] & \wp \arrow[d] \\
\overline{S} \arrow[r, "\Phi_\Sigma"] & \wp_\Sigma
\end{tikzcd}
\end{equation}

\subsection{A degree-computing formula}

Suppose $\wp$ has dimension $2$, then we have a well-defined degree for $\Phi$ and $\Phi_\Sigma$ defined to be $\mathrm{Card}\{\Phi^{-1}(p)\}$ for a generic $p\in \wp$, and $\mathrm{deg}(\Phi)=\mathrm{deg}(\Phi_\Sigma)$. The advantage of computing $\mathrm{deg}(\Phi_\Sigma)$ instead of $\mathrm{deg}(\Phi)$ is that we will be able to look at special boundary points.

\begin{prop}\label{Prop:DegComp}
    Suppose $s\in \partial S$ with $\sigma_s$ has dimension $2$, and moreover $s$ is not a branch point for $\Phi_\Sigma$, then $\mathrm{deg}(\Phi)=\mathrm{deg}(\Phi_\Sigma)$ equals to $\mathrm{Card}\{\Phi^{-1}(\Phi(s))\}$.
\end{prop}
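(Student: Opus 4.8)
The plan is to use that $\Phi_\Sigma \colon \olS \to \wp_\Sigma$ is, under the dimension hypothesis on $\wp$, a proper surjective morphism of compact two-dimensional spaces, hence generically finite of degree $d := \deg(\Phi_\Sigma) = \deg(\Phi)$. By definition $d = \#\Phi_\Sigma^{-1}(p)$ for $p$ in a dense open subset of $\wp_\Sigma$ avoiding the branch locus, so the whole content of the proposition is that the deep boundary point $\Phi_\Sigma(s)$ behaves, for the purpose of fiber-counting, like such a generic $p$. I would establish this by showing $\Phi_\Sigma$ is an unramified covering on a neighborhood of the fiber over $\Phi_\Sigma(s)$.

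First I would locate the fiber. Since $\dim\sigma_s = 2$, the associated nilpotent orbit $(\tau_s, F_s)$ from Theorem \ref{Thm:SchmidNilp} has a two-dimensional cone, so $\Phi_\Sigma(s)$ lies in the boundary $\Gamma\backslash D_\Sigma \setminus \Gamma\backslash D$ and is a $0$-dimensional stratum of $\wp_\Sigma$. Every interior point $s'\in S$ maps to an honest Hodge structure, whose cone is $0$-dimensional, so $\Phi_\Sigma^{-1}(\Phi_\Sigma(s)) \subset \partial S$. I would then invoke the structure theory of the proper generically finite surjection $\Phi_\Sigma$: there is a nowhere-dense analytic subset $B \subset \wp_\Sigma$, containing the branch locus and the image of the locus of positive-dimensional fibers, such that over $\wp_\Sigma \setminus B$ the map restricts to a degree-$d$ topological covering of the interiors (passing to the normalization of $\wp_\Sigma$ if needed to invoke the standard statement). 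Reading the hypothesis ``$s$ is not a branch point'' as $\Phi_\Sigma(s)\notin B$, the fiber is then $0$-dimensional, hence a finite set $\{s=s_1,\dots,s_m\}$ of boundary points, and no boundary curve is contracted to $\Phi_\Sigma(s)$.

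The count is then a local argument at each $s_i$. Unramifiedness forces $\Phi_\Sigma$ to be a local isomorphism of logarithmic manifolds at $s_i$; this identifies the nilpotent-orbit data on both sides, so each $s_i$ is a $0$-dimensional corner stratum whose neighborhood $U_i$ maps isomorphically onto a neighborhood of $\Phi_\Sigma(s)$. For a generic $p$ close to $\Phi_\Sigma(s)$, each $U_i$ then contributes exactly one preimage, while properness guarantees that all preimages of such $p$ lie in $\bigcup_i U_i$. Hence $m = \#\Phi_\Sigma^{-1}(p) = d$, which is the desired equality $\deg(\Phi) = \deg(\Phi_\Sigma) = \#\Phi_\Sigma^{-1}(\Phi_\Sigma(s))$.

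The step I expect to be the main obstacle is making the covering picture rigorous at the boundary in the logarithmic--analytic category rather than for smooth manifolds: one must verify that ``$s$ is not a branch point'' genuinely propagates to unramifiedness of $\Phi_\Sigma$ along the entire fiber, and that a log-isomorphism at each $s_i$ restricts to a biholomorphism of the punctured (interior) neighborhoods so that sheet-counting for nearby $p$ is valid. Controlling the non-normal locus of $\wp_\Sigma$ near $\Phi_\Sigma(s)$ is the accompanying technical point. The hypothesis $\dim\sigma_s = 2$ is what guarantees $\Phi_\Sigma(s)$ sits at a $0$-dimensional stratum so that the fiber is isolated; were the cone of smaller dimension the fiber could be positive-dimensional and the degree count would break down.
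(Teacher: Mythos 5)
The paper states Proposition \ref{Prop:DegComp} without any proof: it is offered as an immediate consequence of the preceding sentence that $\mathrm{deg}(\Phi)=\mathrm{deg}(\Phi_\Sigma)$ is well defined as the generic fiber cardinality, together with the branch-point hypothesis. So there is no written argument to compare against, and your proposal is a correct expansion of exactly the standard fact being invoked: since $\Phi_\Sigma$ is a proper, surjective, generically finite morphism of compact two-dimensional analytic spaces, it restricts to a degree-$d$ covering away from a nowhere-dense analytic set $B$ in the target, the fiber over $\Phi_\Sigma(s)$ lies in $\partial S$, and if that fiber avoids ramification and contraction then each of its points contributes exactly one sheet. (Note also that the statement's $\Phi^{-1}(\Phi(s))$ is a typo for $\Phi_\Sigma^{-1}(\Phi_\Sigma(s))$, since $\Phi$ is not defined at $s\in\partial S$; you have silently made the correct reading.)

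Two caveats, both of which you flag yourself and neither of which I would count as a gap. First, you strengthen the literal hypothesis ``$s$ is not a branch point'' to ``$\Phi_\Sigma(s)\notin B$'', i.e.\ the \emph{entire} fiber is unramified and zero-dimensional. With the literal pointwise reading the count could fail if some other $s_i$ in the fiber were ramified, or if a boundary curve with a one-dimensional monodromy cone were contracted onto the class of $(\sigma_s,F_s)$; so your reading is the one the proposition needs, and it is evidently what the authors intend (in Section \ref{sec05} they verify non-branching by exhibiting an explicit local isomorphism at the chosen corner point). Second, the covering and local-isomorphism statements must be run in the category of logarithmic manifolds, with the log-isomorphism at each $s_i$ restricting to a biholomorphism of the interior (punctured) neighborhoods, possibly after passing to the normalization of $\wp_\Sigma$; these are genuine technical points, but they are standard and are absorbed by the authors into the definitions of degree and branch point. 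Your argument is sound.
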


In general $\Gamma$ is not neat, and we will need to pass to a neat subgroup $\Gamma^{'}\leq \Gamma$ of finite index. There exists a finite analytic covering map $\varphi: S^{'}\rightarrow S$ such that 
\begin{equation}
\begin{tikzcd}
\pi_1(S^{'}) \arrow[d] \arrow[r, "\rho"] & \Gamma^{'} \arrow[d] \\
\pi_1(S) \arrow[r, "\rho"] & \Gamma
\end{tikzcd}
\end{equation}
where $\rho$ is the monodromy representation. Lift the original PVHS to $S^{'}$:
\begin{equation}\label{Fig:liftbyneatsubgroup}
\begin{tikzcd}
S^{'} \arrow[d, "\varphi"] \arrow[r, "\Phi^{'}"] & \Gamma^{'}\backslash D \arrow[d] \\
S \arrow[r, "\Phi"] & \Gamma\backslash D
\end{tikzcd}
\end{equation}
Let $\wp^{'}:=\mathrm{Img}(\Phi^{'})$. Since both $S^{'}\xrightarrow{\varphi} S$ and $\wp^{'}\rightarrow \wp$ have the same degree $=[\Gamma: \Gamma^{'}]$, $\mathrm{deg}(\Phi^{'})=\mathrm{deg}(\Phi)$. In other words, passing $\Gamma$ to a finite-index neat subgroup does not change the generic degree. 

Take any $s\in \olS-S$ with $\mathrm{dim}(\sigma_s)=2$. Suppose for any $t\neq s$, $\sigma_s \neq \sigma_t$ mod $\Gamma$. Take a small neighborhood $s\in U\subset \olS$ with $U\cap S\simeq (\Delta^{*})^2$. The restricted period map $\Phi_{U\cap S}$ can be decomposed as:
\begin{equation}\label{eqn:local2globalperiodmap}
    U\cap S\xrightarrow{\Phi_s} \Gamma_s\backslash D \xrightarrow{\varphi_s} \Gamma\backslash D
\end{equation}
where $\Gamma_s$ is generated by the local monodromy around $s$ and $\varphi_s$ is the projection map. 

Let $\wp_s:=\Phi_s(U\cap S)$. Since $\sigma_s$ is non-degenerate, possibly after shrinking $U$ we may assume $\Phi_s$ and $\varphi_s|_{\wp_s}$ are both proper morphisms between complex analytic spaces such that the (generic) degree of them are well-defined.
\begin{prop}
    $\mathrm{deg}(\Phi)=\mathrm{deg}(\Phi_s)\cdot\mathrm{deg}(\varphi_s|_{\wp_s})$, 
\end{prop}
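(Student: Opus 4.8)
The plan is to compute $\mathrm{deg}(\Phi)=\mathrm{deg}(\Phi_\Sigma)$ by counting preimages of a generic point $p\in\wp$ lying close to the boundary value $\Phi_\Sigma(s)$, and then to split this count through the factorization \eqref{eqn:local2globalperiodmap}. The first and essential step is a \emph{localization}. Since $S$ is a surface, $U\cap S\simeq(\Delta^{*})^2$ carries at most two monodromy logarithms, so $\mathrm{dim}(\sigma_s)=2$ is the maximal cone dimension and the minimal $\tau_s\in\Sigma$ with $\sigma_s\subset\tau_s$ is $\sigma_s$ itself; thus $\Phi_\Sigma(s)$ is determined by $(\sigma_s,F_s)$ mod $\Gamma$. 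The hypothesis $\sigma_s\neq\sigma_t$ mod $\Gamma$ for all $t\neq s$ then forces $\Phi_\Sigma^{-1}(\Phi_\Sigma(s))=\{s\}$. As $\Phi_\Sigma$ is a proper morphism of compact complex analytic spaces by Theorem \ref{Thm:DR23main}, some neighborhood $W$ of $\Phi_\Sigma(s)$ satisfies $\Phi_\Sigma^{-1}(W)\subset U$; shrinking $U$ accordingly, every $\Phi$-preimage of a generic $p\in W\cap\wp$ lies in $U\cap S$, and hence $\mathrm{deg}(\Phi)=\mathrm{Card}\{(\Phi|_{U\cap S})^{-1}(p)\}$.

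It then remains to count this local fiber via $\Phi|_{U\cap S}=\varphi_s\circ\Phi_s$. By the hypotheses already arranged, $\Phi_s$ and $\varphi_s|_{\wp_s}$ are proper generically finite morphisms onto their images, with generic fiber cardinalities $\mathrm{deg}(\Phi_s)$ and $\mathrm{deg}(\varphi_s|_{\wp_s})$. For generic $p$ the fiber $(\varphi_s|_{\wp_s})^{-1}(p)$ consists of $d:=\mathrm{deg}(\varphi_s|_{\wp_s})$ points $q_1,\dots,q_d\in\wp_s$, each lying in the locus over which $\Phi_s$ has its generic fiber size, so
\[
\mathrm{Card}\{(\Phi|_{U\cap S})^{-1}(p)\}=\sum_{j=1}^{d}\mathrm{Card}\{\Phi_s^{-1}(q_j)\}=\mathrm{deg}(\Phi_s)\cdot\mathrm{deg}(\varphi_s|_{\wp_s}).
\]
Combining with the localization step yields $\mathrm{deg}(\Phi)=\mathrm{deg}(\Phi_s)\cdot\mathrm{deg}(\varphi_s|_{\wp_s})$.

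I expect the localization step to be the main obstacle, specifically the implication that a generic fiber of $\Phi$ over a point near $\Phi_\Sigma(s)$ is entirely contained in $U$. This is exactly where the assumption on $\sigma_s$ is indispensable: it guarantees that $\Phi_\Sigma$ does not identify $s$ with any other point of $\olS$, so that the properness of $\Phi_\Sigma$ can be used to shrink $W$ and pull it back into $U$. One must also keep track of the possibility that a lower-dimensional stratum point $t$ with $\sigma_t\subsetneq\sigma_s$ could a priori map to $\Phi_\Sigma(s)$; ruling this out again rests on the separation of nilpotent-orbit data provided by the hypothesis. The final multiplicativity of generic fiber cardinalities for composable proper dominant maps of equidimensional analytic spaces is standard, and I would only cite it.
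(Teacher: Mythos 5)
Your proposal is correct and takes essentially the same route as the paper: the paper's entire proof is the assertion that the proposition ``follows immediately from the sequence \eqref{eqn:local2globalperiodmap} and our assumption on $\sigma_s$,'' and your localization step (using $\sigma_s\neq\sigma_t$ mod $\Gamma$ plus properness of $\Phi_\Sigma$ to trap a generic fiber inside $U\cap S$) followed by multiplicativity of generic fiber counts for the composition $\varphi_s\circ\Phi_s$ is precisely the fleshed-out version of that one-liner. If anything, your write-up is more careful than the paper's, e.g.\ in flagging that lower-dimensional strata could a priori also map to $\Phi_\Sigma(s)$.
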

\begin{proof}
    This follows immediately from the sequence \ref{eqn:local2globalperiodmap} and our assumption on $\sigma_s$. 
\end{proof}
Note that by \cite{KU08}, $\Phi_s$ admits a Kato-Usui type completion
\begin{equation}
    \overline{\Phi_s}: U\rightarrow \Gamma_s\backslash D_s.
\end{equation}
The assumption $\mathrm{dim}(\sigma_s)=2$ implies $\Phi_s$ is proper up to a possible shrinking of $U$, therefore by \cite{Usu06}, $\overline{\wp_s}:=\overline{\Phi_s}(U)$ admits a structure of complex analytic space. Therefore we also have:
\begin{prop}\label{prop:localtoglobaldegree}
    $\mathrm{deg}(\Phi)=\mathrm{deg}(\overline{\Phi_s})\cdot\mathrm{deg}(\overline{\varphi_s}|_{\overline{\wp_s}})$, 
\end{prop}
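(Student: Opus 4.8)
The plan is to deduce this refinement of the preceding proposition by checking that replacing each interior map in the factorization \eqref{eqn:local2globalperiodmap} with its Kato--Usui completion preserves the generic degree. Concretely, I would establish the two identities $\mathrm{deg}(\overline{\Phi_s})=\mathrm{deg}(\Phi_s)$ and $\mathrm{deg}(\overline{\varphi_s}|_{\overline{\wp_s}})=\mathrm{deg}(\varphi_s|_{\wp_s})$ separately, and then substitute them into the already-proved equality $\mathrm{deg}(\Phi)=\mathrm{deg}(\Phi_s)\cdot\mathrm{deg}(\varphi_s|_{\wp_s})$.

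The common mechanism behind both identities is that the generic degree of a proper, generically finite morphism of complex analytic spaces may be read off over any dense open subset of its image; so it suffices to exhibit a dense open locus over which the completed map and the interior map have identical fibers. For $\overline{\Phi_s}$, I would use that $\mathrm{dim}(\sigma_s)=2$ makes $\Phi_s$, and hence $\overline{\Phi_s}$ by \cite{Usu06}, proper after shrinking $U$, and that the KNU space $\Gamma_s\backslash D_s$ contains $\Gamma_s\backslash D$ as its open locus of pure polarized Hodge structures, with $\overline{\Phi_s}$ carrying the boundary $U\setminus(U\cap S)$ into the complementary nilpotent-orbit locus. This yields $\overline{\Phi_s}^{-1}(\wp_s)=U\cap S$; since $\wp_s$ is dense open in $\overline{\wp_s}$, a generic fiber of $\overline{\Phi_s}$ lies over $\wp_s$ and coincides with the corresponding fiber of $\Phi_s$, so the two degrees agree.

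The identical reasoning handles $\overline{\varphi_s}|_{\overline{\wp_s}}$: its generic fibers lie over the dense open pure locus $\wp\subset\Gamma\backslash D$, over which it restricts to $\varphi_s|_{\wp_s}$, and a pure point cannot have a boundary (nilpotent-orbit) preimage in $\overline{\wp_s}$, so $\overline{\varphi_s}^{-1}(\wp)\cap\overline{\wp_s}=\wp_s$ and the degrees match. Combining the two identities with the previous proposition then gives $\mathrm{deg}(\Phi)=\mathrm{deg}(\overline{\Phi_s})\cdot\mathrm{deg}(\overline{\varphi_s}|_{\overline{\wp_s}})$.

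The step I expect to be the main obstacle is not the degree bookkeeping but verifying cleanly that the relevant completions are genuine proper morphisms of complex analytic spaces enjoying the ``boundary maps to boundary'' property --- precisely that the preimage under $\overline{\Phi_s}$ of the pure locus equals $U\cap S$ and that $\overline{\wp_s}$ is irreducible with $\wp_s$ dense. This is exactly where one invokes the properness supplied by $\mathrm{dim}(\sigma_s)=2$ together with the analytic-space structure on $\overline{\wp_s}$ furnished by \cite{KU08} and \cite{Usu06}; once these inputs are granted, the density argument for the generic degree is routine.
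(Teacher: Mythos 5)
Your proposal is correct and matches the paper's (largely implicit) argument: the paper deduces this proposition from the preceding one simply by noting that the Kato--Usui completions exist, are proper after shrinking $U$, and restrict to $\Phi_s$ and $\varphi_s|_{\wp_s}$ over the dense open pure locus, so the generic degrees are unchanged. You have merely spelled out the density/boundary-to-boundary details that the paper leaves to the reader.
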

\begin{remark}
    In $\ref{prop:localtoglobaldegree}$, $s$ is chosen in the unknown space $\olS$. However, since $\olS$ and $\hat{S}$ are birationally equivalent, we may apply Proposition \ref{prop:localtoglobaldegree} on the original space $\hat{S}$.
\end{remark}
To use the formula, we need to compute the two degrees on the right hand side separately. $\mathrm{deg}(\overline{\Phi_s})$ will be computed via coordinate interpretation and we will do it for specific examples in Section 4-5. In the next subsection we show how to find $\mathrm{deg}(\overline{\varphi_s}|_{\overline{\wp_s}})$.

\subsection{The local-to-global map for Kato-Nakayama-Usui spaces}

The following two lemmas are critical to our argument:

\begin{lemma}[\cite{KU08}, Prop. 7.4.3]
    Suppose $\Gamma\leq G_\bZ$ is neat and $(\sigma, F)$ is a nilpotent orbit. If $\gamma\in \Gamma$ satisfies $\gamma\cdot (\sigma, F)=(\sigma, F)$, then $\gamma\in \Gamma_\sigma:=\exp(\sigma_\bC)\cap \Gamma$.
\end{lemma}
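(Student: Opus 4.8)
The plan is to show in two stages that $\gamma$ is forced into $\exp(\sigma_\bC)$: first that $\gamma$ must be unipotent and act trivially on the graded pieces of the monodromy weight filtration, and then to upgrade this to membership in $\exp(\sigma_\bC)$ using the cone structure together with the rigidity of the nilpotent orbit. Throughout I write $W=W(\sigma)$ for the monodromy weight filtration attached to $\sigma$ and use that, since $(\sigma,F)$ is a nilpotent orbit, $F$ induces a polarized Hodge structure on each graded piece $\mathrm{Gr}^W_m$. The hypothesis $\gamma\cdot(\sigma,F)=(\sigma,F)$ unwinds to $\mathrm{Ad}(\gamma)\sigma=\sigma$ together with $\gamma\cdot Z=Z$ for the orbit $Z:=\exp(\sigma_\bC)F$.

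First I would extract an automorphism on the graded. Since $W$ is canonically determined by $\sigma$, the relation $\mathrm{Ad}(\gamma)\sigma=\sigma$ gives $\gamma W_m=W_m$ for all $m$, so $\gamma$ induces $\bar\gamma$ on $\mathrm{Gr}^W$. Because each $N\in\sigma$ lowers $W$, the group $\exp(\sigma_\bC)$ acts as the identity on $\mathrm{Gr}^W$, so every point of $Z$ induces the same Hodge filtration on each $\mathrm{Gr}^W_m$; as $\gamma$ carries $F$ to another point of $Z$, the map $\bar\gamma$ preserves this Hodge filtration. Thus $\bar\gamma$ is an automorphism of the polarized Hodge structure on $\mathrm{Gr}^W$.

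The key reduction is then a compactness-plus-neatness argument. A real automorphism preserving $Q$ and the Hodge decomposition preserves the positive-definite Hodge form, so $\bar\gamma$ lies in the compact group preserving the Hodge metric on $\mathrm{Gr}^W$; being integral it is discrete, hence $\bar\gamma$ has finite order. Since the eigenvalues of $\gamma$ on $H$ coincide with those of $\bar\gamma$ on $\mathrm{Gr}^W$ (as $\gamma$ preserves $W$), $\gamma$ is quasi-unipotent, and neatness of $\Gamma$ forces every eigenvalue to be $1$, i.e. $\gamma$ is unipotent. A finite-order unipotent operator is trivial, so $\bar\gamma=\mathrm{id}$ and $\gamma$ acts trivially on $\mathrm{Gr}^W$.

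It remains to deduce $\gamma\in\exp(\sigma_\bC)$, and this is the step I expect to be the main obstacle. As a lattice-preserving unipotent automorphism of the strongly convex rational cone $\sigma$, $\mathrm{Ad}(\gamma)$ permutes the extremal rays and sends each primitive generator to a primitive generator; unipotence forbids a nontrivial permutation, so $\mathrm{Ad}(\gamma)$ fixes every generator and hence all of $\sigma_\bR$ pointwise. Therefore $\gamma$ centralizes $U:=\exp(\sigma_\bC)$. Using $\gamma Z=Z$ and this commutativity, for $u\in U$ one has $\gamma(uF)=u(\gamma F)=u_0(uF)$ where $\gamma F=u_0F$ with $u_0\in U$; thus $\gamma$ acts on $Z$ as left multiplication by $u_0$, so $\delta:=u_0^{-1}\gamma$ fixes $Z$ pointwise, and since $\gamma=u_0\delta$ it suffices to prove $\delta\in U$. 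The hard part will be exactly this: showing that an element that fixes the whole nilpotent orbit $Z$ pointwise (equivalently, fixes an open subset of $Z\cap D$ on which the polarization is positive), centralizes $\sigma$, and is trivial on $\mathrm{Gr}^W$, cannot contribute any unipotent direction transverse to $\sigma_\bC$. I would establish this through the positivity built into the $\mathrm{SL}_2$-orbit/$\mathrm{CKS}$ description of $(\sigma,F)$ — the same rigidity underlying Kato--Usui's \cite[Prop. 7.4.3]{KU08} — which pins the translation realizing $\gamma$ on $Z$ to an element of $\exp(\sigma_\bC)$, yielding $\gamma\in\exp(\sigma_\bC)\cap\Gamma=\Gamma_\sigma$. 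Since this last rigidity is precisely the cited statement, in the paper it is invoked rather than reproved.
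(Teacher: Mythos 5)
The paper does not prove this lemma at all: it is imported verbatim from Kato--Usui (\cite{KU08}, Prop.~7.4.3) and used as a black box, so there is no internal proof to compare against. Judged on its own terms, your attempt carries out the standard preliminary reductions correctly: $\gamma$ preserves $W(\sigma)$ and induces an automorphism of the polarized Hodge structures on $\mathrm{Gr}^W$, which, being integral in a compact group, has finite order; neatness then forces $\gamma$ to be unipotent and hence trivial on $\mathrm{Gr}^W$; and a unipotent lattice automorphism of the rational polyhedral cone $\sigma$ cannot permute its extremal rays nontrivially, so $\mathrm{Ad}(\gamma)$ fixes $\sigma$ pointwise and $\gamma$ centralizes $\exp(\sigma_\bC)$.

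The gap is the final step, and you say so yourself: after writing $\gamma=u_0\delta$ with $u_0\in\exp(\sigma_\bC)$ and $\delta$ fixing the orbit pointwise, you declare that the remaining rigidity ``is precisely the cited statement'' and invoke it. That makes the argument circular --- you have reduced the lemma to its own hard kernel rather than proved it. The missing step is in fact within reach of the tools you already set up: since $\delta$ is unipotent, preserves the filtration $F$, and is trivial on $\mathrm{Gr}^W$, its logarithm lies in $F^0\mathfrak{g}_\bC\cap W_{-1}\mathfrak{g}_\bC=\oplus_{p\geq 0,\,p+q\leq -1}\,\mathfrak{g}^{p,q}$ in the Deligne bigrading of the LMHS, a space disjoint from $\mathfrak{g}^{-1,-1}\supset\sigma_\bC$; comparing this with the reality of $\log\gamma=\log u_0+\log\delta$ (after reducing to the $\bR$-split case via the Cattani--Kaplan--Schmid splitting, which is where the $\mathrm{SL}_2$/positivity input you allude to actually enters) forces $\log\delta=0$ and $u_0\in\exp(\sigma_\bR)$, whence $\gamma=u_0\in\exp(\sigma_\bC)\cap\Gamma$. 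Without some such argument the proposal is an honest but incomplete reduction, not a proof of the statement.
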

\begin{lemma}[\cite{KU08}, Thm. A(iv)]
    Assume $\Gamma$ is neat, then $\Gamma_\sigma\backslash D_\sigma\hookrightarrow\Gamma\backslash D_\Sigma$ is a local homeomorphism.
\end{lemma}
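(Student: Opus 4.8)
The plan is to show the map is a local homeomorphism by verifying three things: it is continuous, it is open, and it is locally injective; a continuous, open, locally injective map is automatically a local homeomorphism onto an open subset. Continuity is immediate from the functoriality of the strong topology used to define $D_\sigma$ and $D_\Sigma$ in \cite{KU08}, so the real work lies in openness and in local injectivity.

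For openness, I would first use that $\sigma\in\Sigma$ to identify $D_\sigma$ with the subset of $D_\Sigma$ consisting of those nilpotent orbits whose cone is a face of $\sigma$, and observe that this subset is open: in the toric-type local charts of \cite{KU08}, a neighborhood of an orbit with cone $\tau$ meets only orbits whose cones are faces of $\tau$, and if $\tau\leq\sigma$ then every such face is again a face of $\sigma$. Granting that $D_\sigma\hookrightarrow D_\Sigma$ is open, openness of the induced map on quotients follows formally: the quotient projections $D_\sigma\to\Gamma_\sigma\backslash D_\sigma$ and $D_\Sigma\to\Gamma\backslash D_\Sigma$ are open (quotients by group actions are open maps), so for open $V\subseteq\Gamma_\sigma\backslash D_\sigma$ its image is the image in $\Gamma\backslash D_\Sigma$ of the open preimage of $V$ in $D_\sigma$, which is open.

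The crux is local injectivity, and this is where I expect the main difficulty. Fix $x\in D_\sigma$, a nilpotent orbit $(\tau,F)$ with $\tau\leq\sigma$. If the map failed to be injective on every neighborhood of $\Gamma_\sigma x$, I would extract representatives $x_n,y_n\in D_\sigma$ with $x_n,y_n\to x$, satisfying $\Gamma_\sigma x_n\neq\Gamma_\sigma y_n$ but $y_n=\gamma_n x_n$ for some $\gamma_n\in\Gamma$. The key external input is that $\Gamma$ acts properly discontinuously on $D_\Sigma$ (part of the main theorem of \cite{KU08}), which forces the $\gamma_n$ to lie in a finite set once $x_n,y_n$ are close to $x$; passing to a subsequence I may take $\gamma_n=\gamma$ constant. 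Taking limits and using continuity of the $\Gamma$-action gives $\gamma x=x$, i.e. $\gamma$ fixes the orbit $(\tau,F)$. By the preceding lemma (\cite[Prop. 7.4.3]{KU08}) this yields $\gamma\in\Gamma_\tau=\exp(\tau_\bC)\cap\Gamma$, and since $\tau$ is a face of $\sigma$ we have $\tau_\bC\subseteq\sigma_\bC$, hence $\gamma\in\Gamma_\sigma$. But then $y_n=\gamma x_n$ forces $\Gamma_\sigma x_n=\Gamma_\sigma y_n$, contradicting the choice of $x_n,y_n$. Thus the map is locally injective.

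The step I expect to be delicate is controlling the $\gamma_n$: this rests on the proper discontinuity of the $\Gamma$-action on $D_\Sigma$ together with continuity of the action in the strong topology, both of which are subtle features of the Kato--Usui construction rather than formal consequences of the statements quoted above; one must also take care that the limit $\gamma x_n\to\gamma x$ is computed in $D_\Sigma$, where the action is defined, and not merely in $D_\sigma$. Once local injectivity is established, restricting to a neighborhood on which the map is injective gives a continuous open bijection onto an open subset of $\Gamma\backslash D_\Sigma$, hence a homeomorphism, which is exactly the desired local homeomorphism.
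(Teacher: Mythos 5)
The paper does not prove this lemma: it is quoted verbatim as Theorem A(iv) of \cite{KU08} and used as a black box, so there is no internal argument to compare yours against. Judged on its own terms, your outline is the standard one and its skeleton is right: continuity, openness, and local injectivity together give a local homeomorphism; $D_\sigma$ sits inside $D_\Sigma$ as the union of the strata indexed by faces of $\sigma$, and the face-ordering of strata in the toric-type charts makes it open; and local injectivity reduces, via \cite[Prop.~7.4.3]{KU08}, to controlling which $\gamma\in\Gamma$ can move points near $x$ to points near $x$.

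Two caveats you should make explicit. First, the ``key external input'' you invoke --- proper discontinuity of the $\Gamma$-action on $D_\Sigma$ in the strong topology --- is itself a clause of the very Theorem A being cited, and it is where essentially all of the analytic content lives (it ultimately rests on the $\mathrm{SL}_2$-orbit theorem through Chapters 6--7 of \cite{KU08}). So your argument is a derivation of part (iv) from other parts of the same theorem plus Prop.~7.4.3, not an independent proof; that is legitimate for a cited lemma, but it is not a proof from scratch. Second, the subsequence step ($x_n,y_n\to x$, pass to $\gamma_n=\gamma$ constant, take limits; and, implicitly, lift a convergent sequence in $\Gamma_\sigma\backslash D_\sigma$ to a convergent sequence in $D_\sigma$) presumes the strong topology is first countable and that the quotient map lifts convergent sequences --- neither is automatic for the strong topology, which is finer than the subspace topology and not locally compact. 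The cleaner route, and the one closer to what \cite{KU08} actually establish, avoids sequences entirely: take a neighborhood $U$ of $x=(\tau,F)$ in $D_\Sigma$ with $\{\gamma\in\Gamma:\gamma U\cap U\neq\emptyset\}$ finite, shrink $U$ using Hausdorffness so that every such $\gamma$ fixes $x$, apply Prop.~7.4.3 to get $\gamma\in\Gamma_\tau=\exp(\tau_\bC)\cap\Gamma\subseteq\Gamma_\sigma$, and read off injectivity of $\Gamma_\sigma\backslash(U\cap D_\sigma)\to\Gamma\backslash D_\Sigma$ directly.
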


These two lemmas and the diagram \eqref{Fig:liftbyneatsubgroup} imply that to calculate $\mathrm{deg}(\overline{\varphi_s}|_{\overline{\wp_s}})$ for non-neat $\Gamma$ (with a finite-index neat subgroup $\Gamma^{'}$ chosen), it is enough to consider the set
\begin{center}
    $\Gamma_{\sigma, \mathrm{tor}}:=\{\gamma\in \Gamma-\Gamma^{'} , \ \gamma (\sigma_s, F_s)=(\sigma_s, F_s), \ \gamma^n\in \Gamma_\sigma \ \text{for some} \ n\in \bZ\}$
\end{center}
and its induced automorphism group on the nilpotent orbit $(\sigma_s, F_s)$. 

Suppose $\sigma_s=\langle N_1, N_2\rangle$. Any finite-order automorphism $\eta$ of the nilpotent orbit $(\sigma_s, F_s)$ is a combination of the following actions:
\begin{enumerate}
    \item Rescaling $N_i$ by a root of unity, and fixes some chosen base point $F_s$;
    \item Permutes $N_1$ and $N_2$, and fixes some chosen base point $F_s$.
\end{enumerate}
In the case $\eta=\mathrm{Ad}_\mu$ for some $\mu\in \Gamma_{\sigma, \mathrm{tor}}$, it must not rescale $N_i$ by a root of unity other than $1$ because of rationality and positivity. Therefore, the only case $\eta$ could be non-trivial is $\eta$ flips the cone. Combining with the fact that $\mathrm{Ad}_\mu$ does not change the LMHS type, we have:
\begin{prop}\label{prop:localtoglobalKNUmapdegree}
  $\mathrm{deg}(\overline{\varphi_s}|_{\overline{\wp_s}})=1$ or $2$. It is $2$ if and only if there exists $\mu\in \Gamma$ whose adjoint action on $(\sigma_s, F_s)$ preserves the nilpotent orbit but flips the boundary of the cone. In particular, if $(\sigma_s, F_s)$ has LMHS type $\langle A|B|C \rangle$ with $A\neq C$, we must have $\mathrm{deg}(\overline{\varphi_s}|_{\overline{\wp_s}})=1$.
\end{prop}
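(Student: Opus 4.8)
The plan is to reduce $\mathrm{deg}(\overline{\varphi_s}|_{\overline{\wp_s}})$ to the order of a finite automorphism group of the nilpotent orbit $(\sigma_s, F_s)$, and then to pin that order down using the rigidity of the two-dimensional cone together with the LMHS type.

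First I would set up the counting. Because $\dim(\sigma_s)=2$, the local map $\overline{\Phi_s}$ is proper after shrinking $U$ and $\overline{\wp_s}$ carries the structure of a complex analytic space by \cite{Usu06}, so a generic fiber of $\overline{\varphi_s}|_{\overline{\wp_s}}$ has a well-defined cardinality, which I would compute at the image of the boundary point $s$. By the local homeomorphism $\Gamma_\sigma\backslash D_\sigma \hookrightarrow \Gamma\backslash D_\Sigma$ from \cite{KU08}, the sheets of $\overline{\varphi_s}$ over this point correspond to the distinct $\Gamma_\sigma$-classes inside the single $\Gamma$-orbit of $(\sigma_s, F_s)$; after passing to the neat subgroup $\Gamma'$ as in \eqref{Fig:liftbyneatsubgroup} and invoking \cite{KU08} (that the only elements of $\Gamma'$ fixing a nilpotent orbit lie in $\Gamma_\sigma$), these classes are exactly counted by the adjoint action of $\Gamma_{\sigma,\mathrm{tor}}$ on $(\sigma_s, F_s)$. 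Thus $\mathrm{deg}(\overline{\varphi_s}|_{\overline{\wp_s}})$ equals the order of the group of automorphisms of $(\sigma_s, F_s)$ realized by $\mathrm{Ad}_\mu$ with $\mu\in\Gamma_{\sigma,\mathrm{tor}}$.

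Next I would bound this group. Any automorphism preserving the orbit preserves the cone $\sigma_s=\langle N_1, N_2\rangle$, hence permutes its two extremal rays, so it either fixes each ray (scaling $N_1, N_2$) or flips them. A scaling realized by $\mathrm{Ad}_\mu$ must be trivial: the $N_i$ are integral, being logarithms of unipotent monodromy, so only roots of unity could occur as scalars, while rationality forbids non-real values and the polarization positivity on $\sigma_s$ forbids any factor other than $+1$. Consequently the realized automorphism group injects into the order-two symmetry group generated by the flip $N_1\leftrightarrow N_2$, which already gives $\mathrm{deg}(\overline{\varphi_s}|_{\overline{\wp_s}})\in\{1,2\}$, with the value $2$ occurring precisely when the flip is induced by some $\mu\in\Gamma$. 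This is the asserted dichotomy.

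Finally, for the ``in particular'' clause I would use that $\mathrm{Ad}_\mu$ preserves the LMHS type along every face of $\sigma_s$. If $(\sigma_s, F_s)$ has type $\langle A|B|C\rangle$, then by definition $(N_1, F_s)$ and $(N_2, F_s)$ have types $A$ and $C$; a flip would carry the type of the first ray to that of the second, forcing $A=C$. Hence $A\neq C$ rules out the flip and forces degree $1$. The hard part is the first step: one must argue carefully that the local-to-global degree is genuinely computed by the induced-automorphism count, i.e. that the generic fiber near $s$ is a torsor under this finite group, which requires combining the properness of $\overline{\Phi_s}$ with the local homeomorphism so that no extra identifications or ramification occur; once this is in place, the rationality-and-positivity rigidity and the LMHS-type argument are routine.
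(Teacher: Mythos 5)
Your proposal is correct and follows essentially the same route as the paper: the same two results from \cite{KU08} (the local homeomorphism $\Gamma_\sigma\backslash D_\sigma\hookrightarrow\Gamma\backslash D_\Sigma$ and the fact that elements of a neat $\Gamma$ fixing a nilpotent orbit lie in $\Gamma_\sigma$) reduce the degree to the automorphism group of $(\sigma_s,F_s)$ induced by $\Gamma_{\sigma,\mathrm{tor}}$, the same rationality-and-positivity argument kills rescalings, and the same LMHS-type invariance of $\mathrm{Ad}_\mu$ handles the $A\neq C$ clause. If anything, you are more explicit than the paper about the reduction step, which the paper states as an immediate consequence of the two lemmas.
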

\begin{remark}
    The same arguments show for a nilpotent orbit $(\sigma, F)$ with $\mathrm{dim}(\sigma)=n$, its automorphism group induced from any arithmetic group $\Gamma\in G_\bQ$ is isomorphic to a subgroup of $S_n$.
\end{remark}

\section{Examples: Calabi-Yau 3-folds in toric variety}\label{sec04}

In this section, we will study the geometric VHS coming from the tautological family of Calabi-Yau hypersurface inside the toric variety over the simplified moduli $\cM_{\mathrm{simp}}$.
The results in this section will be used to compute the generic degree of the period maps
\[
\Phi_{\mathrm{simp}}:\cM_{\mathrm{simp}}\rightarrow \mathrm{\Gamma}\backslash D.
\]
Moreover, we will also compute the generic degree of the map
\[
\phi:\cM_{\mathrm{simp}} \rightarrow \cM.
\]
Then the generic Torelli theorem for the Calabi-Yau hypersurface in question will be established if
\[
\deg(\Phi_{\mathrm{simp}}) = \deg(\phi).
\]

The subsections will be named after the Hodge type of the Calabi-Yau 3-folds, namely in subsection $V_{(a,b)}$, we will study a Calabi-Yau 3-fold with Hodge numbers $(h^{2,1},h^{1,1}) = (a,b)$. These Hodge numbers can be easily computed from the toric data by 
\cite{Bat94}.


\subsection{$V_{(2,29)}$}
We study the smooth Calabi-Yau 3-folds $V$ with Hodge numbers $(h^{2,1},h^{1,1}) = (2,29)$, which is the anti-canonical hypersurface of the toric variety $X_{\Sigma^\circ}\cong \bP_{\Delta^\circ}$. The toric data for $\Delta^\circ$ and $\Sigma$ is
\begin{equation}\label{eqn5.1}
\begin{array}{c|cccccc}
\Xi&v_1 & v_2 & v_3 & v_4 & v_5 & v_6\\
\hline
r_1 & 1 & 0 & 0 & 0 & 1 & -2\\
r_2 & 0 & 1 & 0 & -1 & 1 & -1\\
r_3 & 0 & 0 & 1 & -1 & 2 & -2\\
r_4 & 0 & 0 & 0 & 0 & 3 & -3\\
\hline
w_1 & 1 & 0 & 0 & 0 & 1 & 1\\
w_2 & 0 & 1 & 1 & 1 & 0 & 0
\end{array}.
\end{equation}
The polytope $\Delta^\circ$ can be visualized as follows: $\Xi$ is naturally divided into two sets $\{v_1,v_5,v_6\}$ and $\{v_2,v_3,v_4\}$ where the points in both sets lie in the same plane, the 9 facets of $\Delta$ then come from choosing two points from each set.
After suspension, the toric data becomes
\begin{equation}\label{susfan5.2}
\begin{array}{c|ccccccc}
\bar\Xi& \bar v_1 & \bar v_2 & \bar v_3 &\bar  v_4 &\bar  v_5 &\bar  v_6 &\bar  v_7\\
\hline
\bar r_1 & 1 & 0 & 0 & 0 & 1 & -2 & 0\\
\bar r_2 & 0 & 1 & 0 & -1 & 1 & -1 & 0\\
\bar r_3 & 0 & 0 & 1 & -1 & 2 & -2 & 0\\
\bar r_4 & 0 & 0 & 0 & 0 & 3 & -3 & 0\\
\bar r_5 & 1 & 1 & 1 & 1 & 1 & 1 & 1\\
\hline
\bar w_1 & 1 & 0 & 0 & 0 & 1 & 1 & -3\\
\bar w_2 & 0 & 1 & 1 & 1 & 0 & 0 & -3
\end{array}.
\end{equation}

\subsubsection{Moduli spaces}\label{SubSec:4.1.1}
The toric data shows that $(\Delta^\circ \cap N)_0 = \Delta^\circ \cap N$, i.e., there is no integral point lies in the interior of codimension 2 face. Therefore, we have $\cM = \cM_{\mathrm{poly}}$.

From the multi-degree, it is clear that $\bP_{\Delta^\circ}$ has no root symmetry. Then the generic degree of the map $\phi:\cM_{\mathrm{poly}} \rightarrow  \cM_{\mathrm{simp}}$
is $|\tAut(\Sigma)/\tAut^{t}(\Sigma)|$, where $\tAut^{t}(\Sigma)$ is the normal subgroup of $\tAut(\Sigma)$ that acts trivially on $\cM_{\mathrm{poly}}$. 

Denote $\tAut(\Xi)$ as the subgroup of $\tAut(N)$ that permutes $\Xi$, then a direct calculation shows $\tAut(\Xi) = C_2\times C_3\rtimes S_3$, which acts on $\Xi$ as follows: the $C_2$ factor determines whether the permutation is even or odd, the set $\{v_2,v_3,v_4\}$ is sent to $\{v_2,v_3,v_4\}$ or $\{v_1,v_5,v_6\}$ respectively for the two cases. Then the $S_3$ factor determines the image of $v_2,v_3,v_4$ under the automorphism, and since the parity of the permutation is already determined, there are only $C_3$ choices for the image of $v_1,v_5,v_6$. From this description, it is also clear that every element in $\tAut(\Xi)$ permits the codimension-1 cones of $\Sigma$, thus we have
$\tAut(\Sigma) = \tAut(\Xi)$. As a subgroup of $S_6$ which acts on $\Xi$ by permutation, we have $\tAut(\Sigma)$ is generated by the permutations $a:= (234),b:=(34)(56),c:=(12)(35)(46)$.

A generic point in $\bP(L(\Delta^\circ\cap N))$ is given by
\[
f = \lambda_1 t_1+\lambda_2t_2+\lambda_3t_3+\lambda_4 t^{-1}_2t_3^{-1}+\lambda_5t_1t_2t_3^2t_4^3+\lambda_6t_1^{-2}t_2^{-1}t_3^{-2}t_4^{-3}+\lambda_7,
\]
Then we have
\[
a\cdot f = \lambda_1 t_1+\lambda_3t_2+\lambda_4t_3+\lambda_2 t^{-1}_2t_3^{-1}+\lambda_5t_1t_2t_3^2t_4^3+\lambda_6t_1^{-2}t_2^{-1}t_3^{-2}t_4^{-3}+\lambda_7,
\]
which can be realized as a $T$-action 
\[
(t_1,t_2,t_3,t_4)\rightarrow (t_1,\lambda_2^{-1}\lambda_3t_2,\lambda_3^{-1}\lambda_4t_3,\lambda_2^{1/3}\lambda_3^{1/3}\lambda^{-2/3}_4t_4),
\]
similarly, $b$ can be realized as
\[
(t_1,t_2,t_3,t_4)\rightarrow (t_1,t_2,\lambda_3^{-1}\lambda_4t_3,\lambda_3^{2/3}\lambda^{-2/3}_4\lambda_5^{-1/3}\lambda_6^{1/3}t_4).
\]
However, $c$ can not be realized as a $T$-action, therefore, we have $\tAut^t(\Sigma) = C_3\rtimes S_3$, and the generic degree of $\phi$ is
\[
\deg_g(\phi)=|\tAut(\Sigma)/\tAut^t(\Sigma)|=2.
\]

The toric data for the secondary fan $\Sigma^s$ is
\begin{equation}
\begin{array}{c|ccc}
\Xi^s&v_1^s & v_2^s & v_3^s \\
\hline
r_1^s & 1 & 0 & -1 \\
r_2^s & 0 & 1 & -1 \\
\hline
w_1^s & 1 & 1 & 1 \\
\end{array}.
\end{equation}
Therefore, $\overline{\cM}_{\mathrm{simp}} = X_{\Sigma^s} \cong \bP^2 = \mathrm{Proj}\bC[Z_1,Z_2,Z_3]$, and the $\bZ/2\bZ$-action extend to $\overline{\cM}_{\mathrm{simp}}$ as $Z_1\leftrightarrow Z_2$ that ramifies at $Z_1=Z_2$.

\subsubsection{Picard-Fuchs system}\label{SubSec:4.1.2}
By Lemma \ref{GKZsys}, the GKZ system is generated by
\begin{align*}
    & P_1:= \delta_1^3+z_1(3\delta_1+3\delta_2+1)(3\delta_1+3\delta_2+2)(3\delta_1+3\delta_2+3),\\
    & P_2:= \delta_2^3+z_2(3\delta_1+3\delta_2+1)(3\delta_1+3\delta_2+2)(3\delta_1+3\delta_2+3).
\end{align*}
The canonical affine chart is $\tSpec[z_1,z_2]$, with
\[
z_1 = \frac{\lambda_1\lambda_5\lambda_6}{\lambda_7^2},\quad z_2 = \frac{\lambda_2\lambda_3\lambda_4}{\lambda_7^2},
\]
then $D_{v_1^s} = \bV(z_1)$, $D_{v_{1^s}} = \bV(z_2)$. In the projective coordinates, this is the chart with $Z_3\neq 0$, and $z_i = \frac{X_i}{X_3}, i=1,2$.

By Lemma, \ref{lemquotmum} and the comment after \eqref{eqn:monomatricesv229}, the Picard-Fuchs system on $\cM_{\mathrm{simp}}$ is generated by the two differential operators
\begin{align*}
    & P_1:= \delta_1^3+z_1(3\delta_1+3\delta_2+1)(3\delta_1+3\delta_2+2)(3\delta_1+3\delta_2+3),\\
    & P_2:= (\delta_1^2-\delta_1\delta_2+\delta_2^2)+3(z_1+z_2)(3\delta_1+3\delta_2+1)(3\delta_1+3\delta_2+2).
\end{align*}
For simplicity, we rescale the local coordinates as $z_{1,2}':=3^3\cdot z_{1,2}$, and rename $z_{1,2}'$ as the new $z_{1,2}$. The Picard-Fuchs operators then become
\begin{equation}\label{eqp1p2}
\begin{aligned}
    & P_1:= \delta_1^3+z_1(\delta_1+\delta_2+\frac13)(\delta_1+\delta_2+\frac23)(\delta_1+\delta_2+1),\\
    & P_2:= (\delta_1^2-\delta_1\delta_2+\delta_2^2)+(z_1+z_2)(\delta_1+\delta_2+\frac13)(\delta_1+\delta_2+\frac23).
\end{aligned}
\end{equation}
The Picard-Fuchs ideal sheaf is denoted as $I$, which in the canonical affine affine chart is generated by $P_1$ and $P_2$. The associated $\cD$-module is $\bZ/2\bZ$-equivariant under $z_1\leftrightarrow z_2$, $\partial_1\leftrightarrow \partial_2$. More precisely, $P_2$ is invariant under the $\bZ/2\bZ$-action, and $P_1$ changes to $(\delta_1+\delta_2)P_2-P_1$, so $I$ is $\bZ/2\bZ$-invariant.

\subsubsection{Discriminant locus}

The principal $A$-determinant $E_A$ is irreducible, and is thus equals to the $A$-discriminant $D_A$
\begin{align*}
D_A = & z_1^3 + 3z_1^2z_2 + 3z_1^2 + 3z_1z_2^2 - 21z_1z_2 + 3z_1 + z_2^3 + 3z_2^2 + 3z_2 + 1\\
 = & (z_1+z_2+1)^3-27z_1z_2
\end{align*}
so the discriminant locus is contained inside
\[
Disc= D_A\cup (\bar B- B) = D_A\cup D_{v_1^s}\cup D_{v_2^s}\cup D_{v_3^s}.
\]

In the canonical affine chart $D_A$ intersect $D_{v_1^s}$ at $(-1,0)$ with 3-tangency, and by the $\bZ/2\bZ$ symmetry, $D_A$ intersect $D_{v_2^s}$ at $(0,-1)$ with 3-tangency.

In order to make $Disc$ only have normal crossing singularities, we need to blow-up these tangencies 3 times each. Since we will only be interested in the local monodromy, we only need to study the geometry at $z_1 = 0, z_2 = -1$. Denote the exceptional divisors of the three blow-ups as $E_1,E_2,E_3$, then we want to find 4 Picard-Fuchs ideals corresponding to the normal crossings 
\[
D_{v_1}\cap E_3, \quad E_1\cap E_2,\quad E_2\cap E_3\quad D_A\cap E_3,
\]
First, we make a change of coordinate of \eqref{eqp1p2} by $z_1 \rightarrow z_1$, $z_2\rightarrow z_2-1$. Then the local affine coordinates systems for each of the normal crossing is given by
\begin{equation*}
 \begin{aligned}
     & (s,t) = \left(\frac{z_1}{z_2^3},z_2\right),\quad && (s,t)= \left(\frac{z_2^2}{z_1},\frac{z_1}{z_2}\right),\\
     & (s,t)= \left(\frac{z_2^3}{z_1},\frac{z_1}{z_2^2}\right), \quad && (s,t) = \left(\frac{z_1}{z_2^3}-\frac{1}{27},z_2\right)
 \end{aligned}   
\end{equation*}
With $s=0$ gives the left divisor and $t=0$ gives the right divisor.

\subsubsection{Gauss-Manin connection and nilpotent cones}
By the local Torrelli theorem of Calabi-Yau manifolds, we can take a global multi-valued frame 
\begin{equation}\label{eqn:sympbasisV229}
w = (\Omega,\delta_1\Omega,\delta_2\Omega,\delta_1\delta_2\Omega,\delta_1^2\Omega,\delta_1^2\delta_2\Omega).
\end{equation}

The Gauss-Manin connection can be easily calculated from the Picard-Fuchs ideal by finding the 
\[
\begin{split}
&\nabla_{\delta_1} w =
w \left[
\begin{array}{rrrrrr}
     0 & 0 & 0 & 0 & z_1h^1_{1,5} & z_1h^1_{1,6} \\
     1 & 0 & 0 & 0 & z_1h^1_{2,5} & z_1h^1_{2,6}  \\
     0 & 0 & 0 & 0 & z_1h^1_{3,5} & z_1h^1_{3,6} \\
     0 & 0 & 1 & 0 & z_1h^1_{4,5} & z_1h^1_{4,6} \\
     0 & 1 & 0 & 0 & z_1h^1_{5,5} & z_1h^1_{5,6} \\
     0 & 0 & 0 & 1 & z_1h^1_{6,5} & z_1h^1_{6,6}
\end{array}
\right]=:w\cdot R_1,\\
&\nabla_{\delta_2} w =
w \left[
\begin{array}{rrrrrr}
     0 & 0 & z_2h^2_{1,3} & z_2h^2_{1,4} & 0 & z_2h^2_{1,6}  \\
     0 & 0 & z_2h^2_{2,3} & z_2h^2_{2,4} & 0  & z_2h^2_{2,6} \\
     1 & 0 & z_2h^2_{3,3} & z_2h^2_{3,4} & 0 & z_2h^2_{3,6}  \\
     0 & 1 & 1+z_2h^2_{4,3} & z_2h^2_{4,4} & 0  & z_2h^2_{4,6} \\
     0 & 0 & -1 & z_2h^2_{4,4}& 0  & z_2h^2_{5,6} \\
     0 & 0 & 0 & 1+z_2h^2_{5,4} & 1 & z_2h^2_{6,6}
\end{array}
\right]=:w\cdot R_2,
\end{split}
\]
 where $h^{i}_{l,m}(z_1,z_2)$ are rational functions that is holomorphic at $z_1=z_2=0$. 

Denote $T_1,T_2$ as the log-monodromy matrix around a small loop of the divisors $z_1=0$ and $z_2=0$, represented on $F^0_{\mathrm{lim}}$. Denote the (Tate twisted) log-monodormy matrix as $\overline N_i = \frac{1}{2\pi i}\log(T_i)$, then we have $\overline N_i = (\mathrm{Res}_{z_i=0}R_i)|_{z=0}$, and more precisely

\begin{equation}\label{eqn:monomatricesv229}
\overline N_1 =\left[
\begin{array}{cccccc}
    0 & 0  & 0 & 0 & 0 & 0  \\ 
    1 & 0  & 0 & 0 & 0 & 0 \\ 
    0 & 0 &  0&  0&  0&  0\\ 
    0 & 0 & 1 & 0 & 0 & 0  \\ 
    0 & 1 & 0 & 0 & 0 & 0  \\ 
    0 & 0 & 0 & 1 & 0 &  0 \\ 
\end{array}
\right],\quad \overline N_2 =\left[
\begin{array}{cccccc}
    0 & 0  & 0 & 0 & 0 & 0  \\ 
    0 & 0  & 0 & 0 & 0 & 0 \\ 
    1 & 0 &  0&  0&  0&  0\\ 
    0 & 1 & 1 & 0 & 0 & 0  \\ 
    0 & 0 & -1 & 0 & 0 & 0  \\ 
    0 & 0 & 0 & 1 & 1 &  0 \\ 
\end{array}
\right].
\end{equation}
Then the nilpotent cone associated to $z_1=z_2=0$ is generated by $\overline N_1,\overline N_2$, and a simple calculation shows that this is a MUM point. Therefore, the Picard-Fuchs system \eqref{eqp1p2} is indeed the one we want by Lemma \ref{lemquotmum}.

\subsubsection{Local monodromy and LMHS type}

By the same method as in \cite{Chen23}, the types of LMHS of the rest possible two-dimensional nilpotent cones can be determined up to some ambiguity by calculating the Jordan normal form of the nilpotent operators of the boundary and using the polarization relations. The result is
\begin{align}
 \label{V229:cone1}    \mathrm{Cone}(D_{v_1^s},D_{v_2^s}) & = \braket{III_0|IV_2|III_0}\\
 \label{V229:cone2}   \mathrm{Cone}(D_{v_1^s},E_3) &  = \braket{III_0|IV_2|IV_1} \\ 
    \mathrm{Cone}(E_1,E_2) & = \braket{IV_1|IV_2(IV_1)|IV_1}\\
    \mathrm{Cone}(E_2,E_3) & = \braket{IV_1|IV_2(IV_1)|IV_1}\\
 \label{V229:cone5}    \mathrm{Cone}(D_A,E_3) & = \braket{I_1|IV_2|IV_1}
\end{align}
where there is another copy of cone \eqref{V229:cone2}-\eqref{V229:cone5} by the $\bZ_2$-symmetry of the $\cD$-module. The possible ambiguity has been indicated inside the parathesis. Since the monodromy around $D_{v_3^s}$ is finite, we have excluded the cones associated with it.

\subsubsection{Yukawa coupling and symplectic form}\label{V229:sec.Yuka}
In this subsection, we will identify $\omega$ as 1.
The symplectic form $Q$ is determined by the intersection matrix $Q_{ij} = \int_{V}w_i\cup w_j$, which in turn is determined by the \textit{$n$-point functions}
\[
K^{ij} :=\int_{V} \Omega\wedge \delta_1^i\delta_2^j\Omega, \quad i+j=n.
\]
When $n=3$, these are the so-called \textit{Yukawa couplings}. Rewrite the Picard-Fuchs operators
\begin{align*}
     P_1:=& (1+z_1)\delta_1^3+3z_1\delta_1^2\delta_2+3z_1\delta_1\delta_2^2+z_1\delta_2^3+2z_1\delta_1^2+4z_1\delta_1\delta_2+2z_1\delta_2^2\\
    & +\frac{11}{9}z_1\delta_1+ \frac{11}{9}z_1\delta_2 +\frac29    z_1 \\
     P_2:= & (z_1 + z_2 + 1)\delta_1^2+(2z_1+2z_2-1)\delta_1\delta_2+(z_1 + z_2 + 1)\delta_2^2\\
     &+(z_1+z_2)\delta_1+(z_1+z_2)\delta_2+\frac29(z_1+z_2).
\end{align*}
Together with the Griffith transversality, from $\int_V\Omega\wedge P_1\Omega =0$ we get
\[
(1+z_1)K^{3,0}+3z_1K^{2,1}+3z_1K^{1,2}+z_1K^{0,3} = 0.
\]
Similarly, from $\int_V\Omega\wedge\delta_i P_2\Omega=0$, $i=1,2$, we get
\begin{align*}
    &(z_1 + z_2 + 1)K^{3,0}+(2z_1+2z_2-1)K^{2,1}+(z_1 + z_2 + 1)K^{1,2} =0,\\
    &(z_1 + z_2 + 1)K^{2,1}+(2z_1+2z_2-1)K^{1,2}+(z_1 + z_2 + 1)K^{0,3} =0.
\end{align*}
Therefore, $K^{3,0}$ determines all the other Yukawa couplings. More precisely, we have
\begin{align*}
    K^{21} & = \frac{- 2z_1^2 - z_1z_2 - z_1 + z_2^2 + 2z_2 + 1}{3z_1(z_1 + z_2 - 2)}K^{30},\\
    K^{12} & = \frac{- 2z_2^2 - z_1z_2 - z_2 + z_1^2 + 2z_1 + 1}{3z_1(z_1 + z_2 - 2)}K^{30},\\
    K^{03} & = \frac{z_2}{z_1}K^{30}.
\end{align*}
We note these results are consistent with the $\bZ/2\bZ$ symmetry. Next, from $\int_V \Omega\wedge \delta_1 P_1\Omega=0$, we have
\begin{equation}\label{V229eqn5.4}
\begin{split}
0 = & (1+z_1)K^{4,0}+3z_1K^{3,1}+3z_1K^{2,2}+z_1K^{1,3}+3z_1K^{3,0}\\
& +7z_1K^{2,1}+5z_1K^{1,2}+z_1K^{0,3}, 
\end{split}
\end{equation}
similarly, from $\int_{V}\Omega\wedge\delta_1^2P_2\Omega$, we have
\begin{equation}\label{V229eqn5.5}
\begin{split}
0 =  & (z_1+z_2+1)K^{4,0}+(2z_1+2z_2-1)K^{31}+(z_1+z_2+1)K^{2,2}\\
& + (3z_1+z_2)K^{3,0}+(5z_1+z_2)K^{2,1}+2z_1K^{1,2}.
\end{split}
\end{equation}

To simplify the notation, we denote $\braket{i,j|l,m}=\int_V\delta_1^i\delta_2^j\Omega\wedge \delta_1^l\delta_2^m\Omega$. Then $K^{i,j} = \braket{0,0|i,j}$, and we have $\braket{i,j|l,m} = -\braket{l,m|i,j}$. From $\delta_1^2K^{2,0}=0$, we have $K^{4,0} = -2\braket{1,0|3,0}$, and since $\delta_1K^{3,0} = \braket{1,0|3,0}+K^{4,0}$, we have $K^{4,0} = 2\delta_1K^{3,0}$. Similarly, from $\delta_1\delta_2K^{2,0}=\delta_1^2K^{1,1}=0$ we get
\begin{align*}
    &\braket{1,0|2,1}+\braket{0,1|3,0}+\braket{1,1|2,0}+K^{3,1} = 0,\\
    &2\braket{1,0|2,1}-\braket{1,1|2,0}+K^{3,1}=0.
\end{align*}
combining with
$\delta_1 K^{2,1} = \braket{1,0|2,1}+K^{3,1}$, and $\delta_2K^{3,0} = \braket{0,1|3,0}+K^{3,1}$, we have
\[
K^{3,1} = \frac{3}{2}\delta_1K^{2,1} +\frac12\delta_2K^{3,0}.
\]
By symmetry, we have
\[
K^{1,3} = \frac{3}{2}\delta_2K^{1,2} +\frac12\delta_1K^{0,3},\quad K^{0,4} = 2\delta_2K^{0,3}. 
\]
A similar calculation shows 
\[
K^{2,2} = \delta_1 K^{1,2}+\delta_2 K^{2,1},
\]
Then substitute everything back into \eqref{V229eqn5.4} and \eqref{V229eqn5.5}, we arrive at the system of first order PDEs
\begin{equation}
    \begin{split}
        &\delta_1K^{3,0} = \frac{\alpha_1}{(z_1+z_2-2)D_A}K^{3,0}\\
        &\delta_2K^{3,0} = \frac{\alpha_2}{(z_1+z_2-2)D_A}K^{3,0}
    \end{split}
\end{equation}
where 
\begin{align*}
    \alpha_1 = & - z_1^4 - 2z_1^3z_2 + 4z_1^3 - 18z_1^2z_2 + 9z_1^2 + 2z_1z_2^3 + 6z_1z_2^2  + 6z_1z_2 + 2z_1 \\
    & + z_2^4 + z_2^3 - 3z_2^2 - 5z_2 - 2,\\
    \alpha_2 = & -z_2(2z_1^3 + 6z_1^2z_2 - 30z_1^2 + 6z_1z_2^2 - 6z_1z_2 + 42z_1  + 2z_2^3 \\
    & - 3z_2^2 - 12z_2 - 7),
\end{align*}
With the help of Wolfram Mathematica$^{{\text{\tiny\textregistered}}}$ we get
\[
K^{3,0} = \frac{cz_1}{(z_1+z_2-2)D_A},\quad c\neq 0,
\]
Next, the intersection matrix is given by
\begin{equation}
    Q = \left[\begin{array}{cccccc}
    0 & 0 & 0 & 0 & 0 & K^{2,1} \\
    0 & 0 & 0 & \braket{1,0|1,1} & \braket{1,0|2,0} & \braket{1,0|2,1} \\
    0 & 0 & 0 & \braket{0,1|1,1} & \braket{0,1|2,0} & \braket{0,1|2,1} \\
    * & * & * & 0 & \braket{1,1|2,0} & \braket{1,1|2,1} \\
    * & * & * & * & 0 & \braket{2,0|2,1} \\
    * & * & * & * & * & 0 \\
    \end{array}\right]
\end{equation}
where the $*$ entries are determined by the rest by the antisymmetricity of $Q$. By the previous calculation, we have
\begin{equation}\label{QinYuk}
    Q = \left[\begin{array}{cccccc}
    0 & 0 & 0 & 0 & 0 & K^{2,1} \\
    0 & 0 & 0 & -K^{2,1} & -K^{3,0} & \delta_1K^{2,1}-K^{3,1} \\
    0 & 0 & 0 & -K^{1,2} & -K^{2,1} & \delta_2K^{2,1}-K^{2,2} \\
    * & * & * & 0 & 2\delta_1K^{2,1}-K^{3,1} & \braket{1,1|2,1} \\
    * & * & * & * & 0 & \braket{2,0|2,1} \\
    * & * & * & * & * & 0 \\
    \end{array}\right],
\end{equation}
Now, the entry $\braket{1,1|2,1}$ can be determined by the following. First, from
\begin{equation}
    \begin{aligned}
        0  = & \delta_1^3K^{3,0}  =  \braket{3,0|0,2}+3\braket{2,0|1,2}+3\braket{1,0|2,2}+K^{3,2},\\
        0  = &  \delta_1^2\delta_2K^{1,1}  =  \braket{2,1|1,1}+\braket{2,0|1,2}+2\braket{1,1|2,1}+2\braket{1,0|2,2}\\
        & +\braket{0,1|3,1} +K^{3,2}, \\
        \delta_1^2 K^{1,2} = & \braket{2,0|1,2}+ 2\braket{1,0|2,2}+K^{3,2},\\
        \delta_1\delta_2K^{2,1} = & \braket{1,1|2,1}+\braket{1,0|2,2}+\braket{0,1|3,1}+K^{3,2},\\
        \delta_2^2K^{3,0} = &\braket{0,2|3,0}+2\braket{0,1|3,1} + K^{3,2},
    \end{aligned}
\end{equation}
we get
\[
\braket{1,1|2,1} = K^{3,2}-\delta_1^2K^{1,2}  -\frac32\delta_1\delta_2K^{2,1}-\frac 12\delta_2^2 K^{3,0},
\]
and similarly one has
\begin{align*}
    \braket{2,0|2,1} = & K^{4,1}-\frac 43\delta_1K^{3,1}-\frac16\delta_2K^{4,0}\\
    = & K^{4,1} - 2 \delta_1^2K^{2,1}-\delta_1\delta_2K^{3,0}.
\end{align*}
Therefore, these two entries are determined by the 5-point functions $K^{3,2},K^{4,1}$. To calculate these 5-point functions, we consider $0 =\int_V1\wedge \delta_1^i\delta_2^j P_1$, $i+j=2$, and $0 =\int_V1\wedge \delta_1^i\delta_2^j P_2$, $i+j=3$. These 7 equations can be used to rewrite the 6 different 5-point functions into 4-point and 3-point functions. More precisely, the first 6 equations read
\begin{align*}
    0 = & (z_1 + 1)K^{5,0} + 3z_1K^{4,1} + 4z_1K^{4,0} + 3z_1K^{3,2} + 10z_1K^{3,1} + \frac{56z_1K^{3,0}}{9} \\ & + z_1K^{2,3}   + 8z_1K^{2,2} + \frac{110z_1K^{2,1}}{9} + 2z_1K^{1,3} + 7z_1K^{1,2} + z_1K^{0,3}\\
    0 = & (z_1 + 1)K^{4,1} + 3z_1K^{3,2} + 3z_1K^{3,1} + 3z_1K^{2,3} + 7z_1K^{2,2} \\& + \frac{29z_1K^{2,1}}{9} + z_1K^{1,4}  + 5z_1K^{1,3} + \frac{47z_1K^{1,2}}{9} + z_1K^{0,4} + 2z_1K^{0,3}\\
    0 = & (z_1 + 1)K^{3,2} + 3z_1K^{2,3} + 2z_1K^{2,2} + 3z_1K^{1,4} + 4z_1K^{1,3} \\ & + \frac{11z_1K^{1,2}}{9} + z_1K^{0,5}  + 2z_1K^{0,4} + \frac{11z_1K^{0,3}}{9}\\
    0 = & (z_1 + z_2 + 1)K^{5,0} + (2z_1 + 2z_2 - 1)K^{4,1} + (4z_1 + z_2)K^{4,0}   \\ & + (z_1 + z_2 + 1)K^{3,2} + (7z_1 + z_2)K^{3,1} + \frac{56z_1 + 2z_2}{9}K^{3,0}  \\ & + 3z_1K^{2,2} + 9z_1K^{2,1} + 3z_1K^{1,2}\\
    0 = & (z_1 + z_2 + 1)K^{4,1} + z_2K^{4,0} + (2z_1 + 2z_2 - 1)K^{3,2} + (3z_1 + 3z_2)K^{3,1} \\ &  + z_2K^{3,0}  +(z_1 + z_2 + 1)K^{2,3} + (5z_1 + 2z_2)K^{2,2} + \frac{29z_1 + 11z_2}{9}K^{2,1} \\
    & + 2z_1K^{1,3} + 4z_1K^{1,2} + z_1K^{0,3}\\
    0 = & (z_1 + z_2 + 1)K^{3,2} + 2z_2K^{3,1} + z_2K^{3,0} + (2z_1 + 2z_2 - 1)K^{2,3} \\ & + (2z_1 + 5z_2)K^{2,2} + 4z_2K^{2,1}  + (z_1 + z_2 + 1)K^{1,4} + (3z_1 + 3z_2)K^{1,3}\\
    & + \frac{11z_1 + 29z_2}{9}K^{1,2} + z_1K^{0,4} + z_1K^{0,3}
\end{align*}
and from these equations we can solve $K^{4,1}$ and $K^{3,2}$, which in turn gives $\braket{1,1|2,1}$ and $\braket{2,0|2,1}$. The result is
\begin{align*}
\braket{1,1|2,1} & = -\frac{cz_1\alpha_1}{27(z_1+z_2-2)^4D_A^2} ,\\
\braket{2,0|2,1} & =-\frac{cz_1\alpha_2}{27(z_1+z_2-2)^4D_A^2},
\end{align*}
where $\alpha_{1,2}\in \bZ[z_1,z_2]$ are two complicated degree 6 polynomials. Therefore, at $z_1=z_2=0$, the symplectic form is given by
\[
Q =\frac{c}{12}\cdot \left[
\begin{array}{cccccc}
    0 & 0  & 0 & 0 & 0 & 1  \\ 
    0 & 0  & 0 & -1 & 0 & 0 \\ 
    0 & 0 &  0&  -1&  -1&  0\\ 
    0 & 1 & 1 & 0 & 0 & 0  \\ 
    0 & 0 & 1 & 0 & 0 & 0  \\ 
    -1 & 0 & 0 & 0 & 0 &  0 \\ 
\end{array}
\right],
\]
By rescale the local basis $w$ by $\left(\frac{c}{12}\right)^{\frac12}$, we can get rid of the coefficient $\frac{c}{12}$. These calculations are done with the help of Matlab$^{{\text{\tiny\textregistered}}}$.

\subsection{$V_{(2,38)}$}
The toric data for this case is given by
\begin{equation}
\begin{array}{c|ccccccc}
\Xi&  v_1 &  v_2 &  v_3 &  v_4 &v_5 \\
\hline
 r_1 & 1 & 0 & 0 & 1 & -4 \\
 r_2 & 0 & 1 & 0 & 2 & -5 \\
 r_3 & 0 & 0 & 1 & 1 & -2 \\
 r_4 & 0 & 0 & 0 & 3 & -3 \\
\hline
 w_1 & 1 & 1 & 0 & 0 & 0 \\
 w_2 & 0 & 0 & 1 & 1 & 1 
\end{array}.
\end{equation}
Then, we have $X_\Sigma$ is a quotient of $\bP[3,3,1,1,1]$ by $\bZ/3\bZ$, and the Baytrev mirror of $X_\Sigma$ is $\bP_{\Delta^\circ}$. However, since $X_\Sigma$ has more than terminal singularity, we need to conduct a toric resolution by adding an additional column $(-1,-1,0,0)^t$. We will denote the new fan still as $\Sigma$. Then the suspended toric data is
\begin{equation}
\begin{array}{c|ccccccc}
\bar\Xi& \bar v_1 & \bar v_2 & \bar v_3 &\bar  v_4 &\bar  v_5 &\bar  v_6 &\bar  v_7\\
\hline
\bar r_1 & 1 & 0 & 0 & 1 & -4 & -1 & 0\\
\bar r_2 & 0 & 1 & 0 & 2 & -5 & -1 & 0\\
\bar r_3 & 0 & 0 & 1 & 1 & -2 & 0 & 0\\
\bar r_4 & 0 & 0 & 0 & 3 & -3 & 0 & 0\\
\bar r_5 & 1 & 1 & 1 & 1 & 1 & 1 & 1\\
\hline
\bar w_1 & 1 & 1 & 0 & 0 & 0 & 1 & -3\\
\bar w_2 & 0 & 0 & 1 & 1 & 1 & -3 & 0
\end{array}.
\end{equation}

\subsubsection{Moduli spaces}
Since $\Delta^\circ_{345}$ has an integral interior point $v_7$, in order to show $\cM_{\mathrm{poly}}=\cM$, we need to show the dual face of $\Delta^\circ_{345}$ has no integral interior point. A direct calculation gives that the dual face is the interval between $v^\circ_1,v^\circ_2\in M$, with $v^\circ_1 = (-1,2,-1,-1)$, and $v^\circ_2 = (2,-1,-1,0)$. Thus, we have $\cM_{\mathrm{poly}}=\cM$. It is also clear from the muti-degrees that $\tAut_r(\bP_{\Delta^\circ})$ is trivial.

The fan symmetry of $\tAut(\Sigma)$ the group $S_3$ which is generated by $(12)(34)$ and $(345)$.
 A generic point in $\bP(L(\Delta^\circ\cap N))$ is given by
\[
f = \lambda_1 t_1+\lambda_2t_2+\lambda_3t_3+\lambda_4 t_1t_2^2t_3t_4^3+\lambda_5t_1^{-4}t_2^{-5}t_3^{-2}t_4^{-3}+\lambda_6t_1^{-1}t_2^{-1}+\lambda_7,
\]
One can easily check both $(12)(34)$ and $(345)$ can be realized as a $T$ action, thus $\tAut(\Sigma)$ acts on $\cM_{\mathrm{simp}}$ trivially. Therefore, we have $\cM_{\mathrm{simp}}  = \cM$.

The secondary fan $\Sigma^s$ is
\begin{equation}
\frac{A^s}{B^s} = \begin{array}{c|cccc}
\Xi^s&v_1^s & v_2^s & v_3^s & v_4^s \\
\hline
r_1^s & 1 & 0 & 1 & -1\\
r_2^s & 0 & 1 & -3 & 0\\
\hline
w_1^s & 1 & 0 & 0 & 1 \\
w_2^s & 0 & 3 & 1 & 1 \\
\end{array}.
\end{equation}
Then $\overline{\cM}_{\mathrm{simp}} \cong \bP_{\Delta^s}$ is a smooth toric varies. More concretely, we have $\bP_{\Delta^s}\cong \mathrm{Proj}_{B^s}\bC[Z_1,Z_2,Z_3,Z_4]$, with $Z_i$ has multi-degree $(w_{1,i}^s,w_{2,i}^s)$. Then the canonical affine coordinates are
\[
z_1 = \frac{\lambda_1\lambda_2\lambda_6}{\lambda_7^3} = \frac{Z_1Z_3}{Z_4},\quad z_2 =  \frac{\lambda_3\lambda_4\lambda_5}{\lambda_6^3} = \frac{Z_2}{Z_3^3}.
\]

\subsubsection{Picard-Fuchs equations and discriminant locus}
\begin{align*}
    & P_1 = \delta_1(\delta_1-3\delta_2) +3z_1(3\delta_1+1)(3\delta_1+2)\\
    & P_2 = \delta_2^3-z_2(\delta_1-3\delta_2)(\delta_1-3\delta_2-1)(\delta_1-3\delta_2-2)
\end{align*}
For simplicity, we rescale the local coordinates as $z_{1,2}':=3^3\cdot z_{1,2}$, and rename $z_{1,2}'$ as the new $z_{1,2}$. The Picard-Fuchs operators become
\begin{align*}
    & P_1 = \delta_1(\delta_1-3\delta_2) +z_1(\delta_1+\frac13)(\delta_1+\frac23)\\
    & P_2 = \delta_2^3-z_2(\frac13\delta_1-\delta_2)(\frac13\delta_1-\delta_2-\frac13)(\frac13\delta_1-\delta_2-\frac23)
\end{align*}
The singular locus contains inside
\[
D_A \cup  D_1 \cup D_{v_1^s}\cup D_{v_2^s}\cup D_{v_3^s}\cup D_{v_4^s}
\]
where $D_A = (1+z_1)^3+z_1^3z_2$, $D_1 = 1+z_2$. In this affine chart, we have $D_{v_1^s}$ intersects $D_{v_2^s}$ transversely at origin; 
The $D_A$ intersects $D_{v_2^s} = \bV(z_2)$ at $(z_1,z_2) = (-1,0)$ with 3-tangency and thus requires to be blown up three times, with the exceptional divisors denoted as $E_1,E_2,E_3$;
The $D_1$ intersects $D_{v_1^s}$ transversally at one point $(z_1,z_2) = (0,-1)$. 
Now, there are intersections outside this affine chart. Now, switch back to the original $z_1,z_2$ coordinates before the rescaling and let
\[
z_1' = \frac{Z_1Z_2^{\frac13}}{Z_4},\quad z_2' = \frac{Z_3}{Z_2^{\frac13}},
\]
we have
\[
z_1 = z_1'z_2',\quad z_2 = z_2'^{-3},
\]
Since the coordinates change is étale away from the boundaries, the Picard-Fuchs ideal becomes
\[
\begin{split}
 & P_1 = \delta_1'\delta_2' +3z_1'z_2'(3\delta_1'+1)(3\delta_1'+2)\\
 & P_2 = z_2'^3(\delta_1'-\delta_2')^3-\delta_2'(\delta_2'-1)(\delta_2'-2)
 \end{split}
\]
Similar, by a rescale $z_{1}'':=3^4\cdot z_{1}'$, $z_{2}'':=3^{-1}\cdot z_{2}'$, and rename $z_{1,2}''$ as the new $z_{1,2}'$, then $D_A = (1+z_1'z_2')^3+z_1'^3$, $D_1 = 1+z_2^{-3}$, and
\[
\begin{split}
 & P_1 = \delta_1'\delta_2' +z_1'z_2'(\delta_1'+\frac13)(\delta_1'+\frac23)\\
 & P_2 = 27z_2'^3(\delta_1'-\delta_2')^3-\delta_2'(\delta_2'-1)(\delta_2'-2),
 \end{split}
\]
In this affine chart we have additionally two cones that correspond to $D_{v_2^s}$ intersects $D_{v_3^s}$ transversely at origin, and $D_A$ intersects $D_{v_3^s}$ transversely at $(-1,0)$. The last affine chart that we would want to consider is 
\[
z_1'' = z_1^{-1},\quad z_2'' = z_2,
\]
where $z_i$ are the original $z_i$ before rescale. Then the Picard-Fuchs ideal is
\begin{align*}
    & P_1 = z_1\delta_1(\delta_1+3\delta_2) +3(3\delta_1-1)(3\delta_1-2)\\
    & P_2 = \delta_2^3+z_2(\delta_1+3\delta_2)(\delta_1+3\delta_2+1)(\delta_1+3\delta_2+2),
\end{align*}
Similar, by a rescale $z_{1}''':=3^{-3}\cdot z_{1}'$, $z_{2}'':=3^{3}\cdot z_{2}'$, and rename $z_{1,2}''$ as the new $z_{1,2}'$, then $D_A = (1+z_1)^3+z_2$, $D_1 = 1+z_2$, and
\begin{align*}
    & P_1 = z_1\delta_1(\delta_1+3\delta_2) +(\delta_1-\frac13)(\delta_1-\frac23)\\
    & P_2 = \delta_2^3+z_2(\frac13\delta_1+\delta_2)(\frac13\delta_1+\delta_2+\frac13)(\frac13\delta_1+\delta_2+\frac23),
\end{align*}
This time, $D_A$ intersects $D_1$ and $D_{v_4^s}$ at a triple intersection point $(0,-1)$. Thus we need to blow up this point once, and the exceptional divisor is denoted as $E_0$. Since $D_{v_4^s}$ has finite monodromy, the only new possible two-dimension cones in this chart are those associated to $D_A\cap E_0$ and $D_1\cap E_0$.

\begin{remark}
    All the rescales in this section can be done simultaneously in the homogeneous coordinates $Z_i$ by rescaling $Z_1,Z_2$ by a $3^3$ factor.
\end{remark}

\subsubsection{Nilpotent cone and types of LMHS}
 Similarly, we can take the global multi-valued frame 
\[
w = (\Omega,\delta_1\Omega,\delta_2\Omega,\delta_1\delta_2\Omega,\delta_2^2\Omega,\delta_2^2\delta_1\Omega),
\]
and under this basis, the nilpotent cone at $z_1=z_2=0$ are represented as
\[
\overline N_1 =\left[
\begin{array}{cccccc}\label{eqn:monomatricesv238}
    0 & 0  & 0 & 0 & 0 & 0  \\ 
    1 & 0  & 0 & 0 & 0 & 0 \\ 
    0 & 0 &  0&  0&  0&  0\\ 
    0 & 3 & 1 & 0 & 0 & 0  \\ 
    0 & 0 & 0 & 0 & 0 & 0  \\ 
    0 & 0 & 0 & 3 & 1 &  0 \\ 
\end{array}
\right],\quad \overline N_2 =\left[
\begin{array}{cccccc}
    0 & 0  & 0 & 0 & 0 & 0  \\ 
    0 & 0  & 0 & 0 & 0 & 0 \\ 
    1 & 0 &  0&  0&  0&  0\\ 
    0 & 1 & 0 & 0 & 0 & 0  \\ 
    0 & 0 & 1 & 0 & 0 & 0  \\ 
    0 & 0 & 0 & 1 & 0 &  0 \\ 
\end{array}
\right].
\]
The possible 2-dimensional cones with the types of LMHS are given as follows
\begin{align}
 \label{cone21}    \mathrm{Cone}(D_{v_1^s},D_{v_2^s}) & = \braket{IV_1|IV_2|III_0}\\
 \label{cone22}    \mathrm{Cone}(D_{v_2^s},E_3) & = \braket{III_0|III_0(IV_2)|III_0}\\
 \label{cone23}    \mathrm{Cone}(E_1,E_2) & = \braket{III_0|III_0(IV_2)|III_0}\\
 \label{cone24}    \mathrm{Cone}(E_2,E_3) & = \braket{III_0|III_0(IV_2)|III_0}\\
 \label{cone25}    \mathrm{Cone}(D_A,E_3) & = \braket{I_1|III_0(IV_2)|III_0}\\
    \label{cone26}    \mathrm{Cone}(D_{v_1^s},D_1) & = \braket{IV_1|IV_2|I_1}\\
    \label{cone27}    \mathrm{Cone}(D_{v_2^s},D_{v_3^s}) & = \braket{III_0|III_0(IV_2)|I_1}\\
    \label{cone28}    \mathrm{Cone}(D_{v_3^s},D_A) & = \braket{I_1|I_2(I_1)|I_1}\\
    \label{cone29}    \mathrm{Cone}(D_{v_1^s},D_{v_3^s}) & = \braket{IV_1|IV_2|I_1}\\
        \label{cone210}    \mathrm{Cone}(D_{A},E_0) & = \braket{I_1|III_0(IV_2)|III_0}\\
        \label{cone211}    \mathrm{Cone}(D_{1},E_0) & = \braket{I_1|III_0(IV_2)|III_0}
\end{align}

\subsection{Symplectic form}
Similar to the calculation in Section \ref{V229:sec.Yuka}, one can calculate the symplectic form by calculating the Yukawa couplings. In particular, form $\int_V\Omega\wedge \delta_i P_1\Omega$ and $\int_V\Omega\wedge P_2\Omega$, we have
\begin{align*}
    & (1+z_1)K^{3,0} -3K^{2,1} = 0,\\
    &(1+z_1)K^{2,1} -3K^{1,2} =0,\\
    & z_2K^{3,0}- 9z_2K^{2,1} +27z_2K^{1,2} - 27(1+z_2)K^{0,3} =0,
\end{align*}
where the last equation gives
\[
K^{0,3} = \frac{z_2(1+3z_1+3z_1^2)}{27(1+z_2)}K^{3,0}.
\]

Substitute these into $\int_{V}\Omega\wedge \delta_1^2P_1\Omega=0$, $\int_{V}\Omega\wedge \delta_2P_2\Omega=0$, we have
\begin{align*}
& \delta_{z_1}K^{3,0} = \frac{z_1(6z_1+3z_1^2z_2+3z_1^2+3)}{D_A} K^{3,0},\\
& \delta_{z_2}K^{3,0} = \frac{z_1^3z_2}{D_A} K^{3,0}.\\
\end{align*}
Therefore, we have $K^{3,0} = c\cdot D_A$, where $c$ is some constant. Similar to the calculation in \eqref{QinYuk}, the symplectic form is given as
\[
Q =\frac{c}{9}\cdot \left[
\begin{array}{cccccc}
    0 & 0  & 0 & 0 & 0 & 1  \\ 
    0 & 0  & 0 & -1 & 0 & 0 \\ 
    0 & 0 &  0&  -3&  -1&  0\\ 
    0 & 1 & 3 & 0 & 0 & 0  \\ 
    0 & 0 & 1 & 0 & 0 & 0  \\ 
    -1 & 0 & 0 & 0 & 0 &  0 \\ 
\end{array}
\right].
\]

\subsection{$V_{(2,86)}$, the mirror octic}
This is the smooth Calabi-Yau 3-fold mirror to the 
the smooth anti-canonical hypersurface in $\bP[1,1,2,2,2]$. This family is very similar to the case of $V_{(2,38)}$. In particular, the simplified moduli space is the same as the complex moduli space. The mirror symmetry of this family is studied in detail in \cite{COFK94,CK99}. 

The first author calculated the nilpotent fan for the family in \cite{Chen23}. The two-dimensional nilpotent cones are 
\begin{align*}\label{eqn:conesv286}
    & \sigma_{12} = 
    \braket{IV_2|IV_2|II_1},
    && \sigma_{13} = \braket{IV_2|IV_2|I_1},\\
    &\sigma_{34} = 
    \braket{I_1|I_2|I_1},
    && \sigma_{45} =
    \braket{I_1|II_1|II_0}, \\
    & \sigma_{52} =
    \braket{II_0|II_1|II_1},
    && \sigma_{63} = 
    \braket{I_1|I_2|I_1}, \\
    & \sigma_{67} =
    \braket{I_1|I_2|I_1}.
\end{align*}
Under the following basis of $F^0_{\mathrm{lim}}$
\[
w = (\Omega,\delta_1\Omega,\delta_2\Omega,\delta_1\delta_2\Omega,\delta_1^2\Omega,\delta_1^2\delta_2\Omega),
\]
the nilpotent operators $\overline N_1,\overline N_2$ that generates the cone $\sigma_{12}$ are represented in this basis as
\begin{equation}\label{eqn:monomatricesv286}
\overline N_1 =\left[
\begin{array}{cccccc}
    0 & 0  & 0 & 0 & 0 & 0  \\ 
    1 & 0  & 0 & 0 & 0 & 0 \\ 
    0 & 0 &  0&  0&  0&  0\\ 
    0 & 0 & 1 & 0 & 0 & 0  \\ 
    0 & 1 & 0 & 0 & 0 & 0  \\ 
    0 & 0 & 0 & 1 & 2 &  0 \\ 
\end{array}
\right],\quad \overline N_2 =\left[
\begin{array}{cccccc}
    0 & 0  & 0 & 0 & 0 & 0  \\ 
    0 & 0  & 0 & 0 & 0 & 0 \\ 
    1 & 0 &  0&  0&  0&  0\\ 
    0 & 1 & 0 & 0 & 0 & 0  \\ 
    0 & 0 & 0 & 0 & 0 & 0  \\ 
    0 & 0 & 0 & 0 & 1 &  0 \\ 
\end{array}
\right],
\end{equation}
and the symplectic form is
\begin{equation}
Q = c\cdot\left[\begin{array}{cccccc}0 & 0 & 0 & 0 & 0 & 1 \\0 & 0 & 0 & -1 & -2 & 0 \\0 & 0 & 0 & 0 & -1 & 0 \\0 & 1 & 0 & 0 & 0 & 0 \\0 & 2 & 1 & 0 & 0 & 0 \\-1 & 0 & 0 & 0 & 0 & 0\end{array}\right],
\end{equation}
where $c$ is some constant.


\section{Computing the generic degree}\label{sec05}

In this section we show the main Theorem \ref{Thm:mainthmcomputedegree} by explicitly computing $\mathrm{deg}(\overline{\Phi_s})$ (see Proposition \ref{prop:localtoglobaldegree}) around a specific boundary point $s\in \olS-S$. This will fullfill the proof of Theorem \ref{Thm:mainthmcomputedegree}.
\subsection{$V_{(2,29)}$}
The period map is $\Phi: \cM_{\mathrm{simp}}=:S\rightarrow \Gamma\backslash D$. We choose a minimal resolution $\overline{\cM_{\mathrm{simp}}}$ of $\bP^2$ such that $\overline{\cM_{\mathrm{simp}}}-\cM_{\mathrm{simp}}$ is a normal crossing divisor. 

Take $s=[1:0:0]\in \bP^2$. Since locally around $\bP^2-\cM_{\mathrm{simp}}$ is normal-crossing, we may think $s$ as a codimension $2$ boundary point in $\overline{\cM_{\mathrm{simp}}}-\cM_{\mathrm{simp}}$.

Types listed in \eqref{V229:cone1} - \eqref{V229:cone5} imply $(\sigma_s, F_s)$ is the only local nilpotent orbit with type $\langle\mathrm{III}_0|\mathrm{IV}_2|\mathrm{III}_0 \rangle$, and $\mathrm{deg}(\overline{\varphi_s}|_{\overline{\wp_s}})=2$ as a consequence of Sections \ref{SubSec:4.1.1} and $\ref{SubSec:4.1.2}$. Therefore to show the Theorem \ref{Thm:mainthmcomputedegree} for $V_{(2,29)}$, we only have to show $\mathrm{deg}(\overline{\Phi_s})=1$.

The matrices of the monodromy cone $\sigma_s=\langle N_1, N_2\rangle$ are computed in \ref{eqn:monomatricesv229}. In a local lifting $\tilde{\Phi}_U$ around $s\in U\simeq \Delta^2$, denote $l(z_i):=\frac{\log(z_i)}{2\pi i}$, the local lifted period map \eqref{eqn:localliftperiodmap} takes the form:
\begin{equation}
    \tilde{\Phi}_U(z_1,z_2)=\exp(l(z_1)N_1+l(z_2)N_2)\psi_U(z_1,z_2), \ (z_1,z_2)\in \Delta^2.
\end{equation}
Let $F_s:=\psi_U(0,0)\in \check{D}$, under the basis \eqref{eqn:sympbasisV229} which we denote as $(e_1,...,e_6)$, we make suppose 
\begin{equation}
    F^3\psi_U(z_1,z_2)=\langle(1,\zeta_1(z_1,z_2),...,\zeta_5(z_1,z_2))\rangle.
\end{equation}
By computation we have
\begin{equation}\label{eqn:periodmatrixcomputationV229}
    F^3\tilde{\Phi}_U(z_1,z_2)=\langle(1,\zeta_1(z_1,z_2)+l(z_1),\zeta_2(z_1,z_2)+l(z_2),*,*,*)\rangle
\end{equation}
where *'s are irrelevant terms. This implies we may construct the map $U\rightarrow \Delta^2$ by 
\begin{eqnarray}\label{eqn:localnbhisomorphismV229}
    &(z_1,z_2)\rightarrow (\exp(2\pi i\langle F^3\tilde{\Phi}_U(z_1,z_2), e_5\rangle), \exp(2\pi i\langle F^3\tilde{\Phi}_U(z_1,z_2), e_4\rangle))\\
    &=(z_1\exp(2\pi i\zeta_1(z_1,z_2)), z_2\exp(2\pi i\zeta_2(z_1,z_2))).
\end{eqnarray}
Since the Jacobian matrix of this map at $(0,0)$ is invertible, $(0,0)$ is not a branched point and \eqref{eqn:localnbhisomorphismV229} is a local isomorphism. This implies the desired $\mathrm{deg}(\overline{\Phi_s})=1$.

\subsection{$V_{(2,38)}$}

We use the boundary point \eqref{cone21} and its corresponding monodromy matrices \eqref{eqn:monomatricesv238}. Using the same notations as the last subsection, the map \eqref{eqn:periodmatrixcomputationV229} now read as:
\begin{equation}\label{eqn:periodmatrixcomputationV238}
    F^3\tilde{\Phi}_U(z_1,z_2)=\langle(1,\zeta_1(z_1,z_2)+l(z_2),\zeta_2(z_1,z_2)+l(z_1),*,*,*)\rangle,
\end{equation}
and the analog of map \eqref{eqn:localnbhisomorphismV229} is read as:
\begin{equation}\label{eqn:localnbhisomorphismV238}
    (z_1,z_2)\rightarrow (z_2\exp(2\pi i\zeta_1(z_1,z_2)), z_1\exp(2\pi i\zeta_2(z_1,z_2))).
\end{equation}
The same argument shows $\mathrm{deg}(\overline{\Phi_s})=1$. Note that among monodromy cones \eqref{cone21} - \eqref{cone211}, the LMHS type of \eqref{cone21} is unique, and \eqref{cone21} does not have any non-trivial torsion automorphisms other than rescaling because of Proposition \ref{prop:localtoglobalKNUmapdegree}, thus $\mathrm{deg}(\overline{\varphi_s}|_{\overline{\wp_s}})=1$. 

\subsection{$V_{(2,86)}$}

We use the boundary point corresponds to the cone $\sigma_{12}$ whose monodromy matrices are \eqref{eqn:monomatricesv286}. Again using the same notations as the last subsection, the map \eqref{eqn:periodmatrixcomputationV229} now read as:
\begin{equation}\label{eqn:periodmatrixcomputationV286}
    F^3\tilde{\Phi}_U(z_1,z_2)=\langle(1,\zeta_1(z_1,z_2)+l(z_1),\zeta_2(z_1,z_2)+l(z_2),*,*,*)\rangle,
\end{equation}
and the analog of map \eqref{eqn:localnbhisomorphismV229} is read as:
\begin{equation}\label{eqn:localnbhisomorphismV286}
    (z_1,z_2)\rightarrow (z_1\exp(2\pi i\zeta_1(z_1,z_2)), z_2\exp(2\pi i\zeta_2(z_1,z_2))).
\end{equation}
The same argument shows $\mathrm{deg}(\overline{\Phi_s})=1$. The same reasons as those for $V_{(2,38)}$ show that $\mathrm{deg}(\overline{\varphi_s}|_{\overline{\wp_s}})=1$. The proof of Theorem \ref{Thm:mainthmcomputedegree} is now completed.

\printbibliography
\end{document}